\newtheorem{thm}{Theorem}
\newtheorem{lem}{Lemma}
\newtheorem{ass}{Assumption}
\theoremstyle{definition}
\newtheorem{rem}{Remark}
\pgfplotsset{compat=1.17}
\newcommand{\Div}[0]{\mathbf{div}\,}
\newcommand{\bs}[1]{\boldsymbol{#1}}
\newcommand{\Sig}[0]{\bs{\sigma}}
\newcommand{\Eps}[0]{\bs{\varepsilon}}
\newcommand{\U}[0]{\bs{u}}
\newcommand{\V}[0]{\bs{v}}
\newcommand{\W}[0]{\bs{w}}
\newcommand{\Lam}[0]{\bs{\lambda}}
\newcommand{\Mu}[0]{\bs{\mu}}
\newcommand{\X}[0]{\bs{\xi}}
\newcommand{\jump}[1]{\left\llbracket #1 \right\rrbracket}
\newcommand{\Bf}[0]{\mathcal{B}}
\newcommand{\Lf}[0]{\mathcal{L}}
\newcommand{\N}[0]{\bs{n}}
\newcommand{\F}[0]{\bs{f}}
\newcommand{\Z}[0]{\bs{0}}
\newcommand{\Th}[0]{\mathcal{T}_h}
\newcommand{\Gh}[0]{\mathcal{G}_h}
\newcommand{\enorm}[1]{{\left\vert\kern-0.25ex\left\vert\kern-0.25ex\left\vert #1 
            \right\vert\kern-0.25ex\right\vert\kern-0.25ex\right\vert}}
\begin{document}

\title{Mortaring for linear elasticity using mixed and stabilized finite elements}

\begin{abstract}
%   We consider the discretization of a two-body domain decomposition problem for linear elasticity using mixed and stabilized
%   finite element methods.
%   The stabilized method uses a Lagrange multiplier
%   residual stabilization on the trace mesh of the less rigid body.
%   The mixed method uses a continuous Lagrange multiplier on the trace mesh and does not need
%   stabilization if some assumptions on the finite
%   element mesh hold.
%   We prove the stability of the methods
%   in a mesh-dependent norm and
%   discuss applications of the techniques
%   in the context of computational contact mechanics.

%We present evidence for the stability of certain simple mixed
%mortar methods for linear elasticity and compare them to stabilized finite element methods.
%In particular, we consider a mixed method using continuous approximation
%for both the displacement variable and the Lagrange multiplier, prove its stability under
%certain assumptions on the finite element mesh and the boundary conditions,
%and demonstrate that it performs similarly as a stabilized method
%where the Babu\v{s}ka--Brezzi condition is circumvented through the use of stabilization terms.
%Finally, we present numerical results using 
%two-dimensional Python implementations of the mixed
%and the stabilized methods and 
%three-dimensional Fortran implementation of the
%mixed method.

The purpose of this work is to study mortar methods
for linear elasticity using standard
low order finite element spaces.
Based on residual stabilization,
we introduce a stabilized mortar method
for linear elasticity and compare it to
the unstabilized mixed mortar method.
For simplicity,
both methods use a Lagrange multiplier
defined on a trace mesh inherited
from one side of the interface only.
We derive a quasi-optimality estimate for the stabilized method and present the stability criteria of the mixed
$P_1-P_1$ approximation.
Our numerical results demonstrate
the stability and the convergence of the methods
for tie contact problems.
Moreover, the results
show that the mixed method
can be successfully extended to 
three dimensional problems.

\end{abstract}

\author[1]{Tom~Gustafsson}
%% \ead{tom.gustafsson@aalto.fi}
\author[2]{Peter~Råback}
%% \ead{peter.raback@csc.fi}
\author[3]{Juha~Videman}
%% \ead{jvideman@math.tecnico.ulisboa.pt}

%% affiliation[1]{organization={Department of Mathematics and Systems Analysis, Aalto University},
%% addressline={P.O.~Box 11100},
%% postcode={00076 Aalto},
%% country={Finland}}

%% \affiliation[2]{organization={CSC -- IT Center for Science Ltd},
%% addressline={P.O. Box 405},
%% postcode={02101 Espoo},
%% country={Finland}}

%% \affiliation[3]{organization={CAMGSD/Departamento de Matem\unexpanded{á}tica, Instituto Superior T\unexpanded{é}cnico, Universidade de Lisboa},
%% postcode={1049-001 Lisbon},
%% country={Portugal}}

%Department of Mathematics and Systems Analysis, Aalto University, P.O. Box 11100,
%00076 Aalto, Finland 
\maketitle

\section{Introduction}

Mortaring via Lagrange multipliers
is also a prototype of elastic contact problems
besides its use in domain decomposition.
Consider, for example, a linear elastic body $\Omega \subset \mathbb{R}^d$, $d=2,3$, with the displacement field $\U : \Omega \rightarrow \mathbb{R}^d$ and body loading $\F : \Omega \rightarrow \mathbb{R}^d$.  In the absence of other external forces, the principle of virtual work reads as follows:
\[
\int_{\Omega} \Sig(\U) : \Eps(\V)\,\mathrm{d}x = \int_{\Omega} \F \cdot \V \,\mathrm{d}x \quad \forall \V \in V,
\]
where $\Eps(\W) = \frac12(\nabla \W + \nabla \W^T)$ is the infinitesimal strain tensor, $\Sig(\W) = \bs{\mathcal{C}} : \Eps(\W)$ is the linear elastic
stress tensor corresponding to the fourth-order constitutive tensor $\bs{\mathcal{C}}$,
and $V$ is the space of kinematically admissible displacement fields.
Suppose now that $\Omega$
is split into two parts $\Omega_1$ and $\Omega_2$ with the displacements $\U_1 \in V_1$ and $\U_2 \in V_2$,
respectively.  The continuity across the interface $\Gamma = \partial \Omega_1 \cap \partial \Omega_2$ can be enforced using
a vectorial Lagrange multiplier field $\Lam : \Gamma \rightarrow \mathbb{R}^d$, $\Lam \in \varLambda$, through the saddle point formulation
\begin{equation}
    \label{eq:saddleform}
    \left\{
    \begin{alignedat}{5}
      &\int_{\Omega_1} \Sig(\U_1) : \Eps(\V_1)\,\mathrm{d}x && ~ && + \int_{\Gamma} \Lam \cdot \V_1 \,\mathrm{d}s &&= \int_{\Omega_1} \F \cdot \V_1\,\mathrm{d}x \qquad &&\forall \V_1 \in V_1, \\
      &~&&\int_{\Omega_2} \Sig(\U_2) : \Eps(\V_2)\,\mathrm{d}x && - \int_{\Gamma} \Lam \cdot \V_2 \,\mathrm{d}s &&= \int_{\Omega_2} \F \cdot \V_2\,\mathrm{d}x \qquad &&\forall \V_2 \in V_2, \\
      &\int_{\Gamma} \Mu \cdot \U_1 \,\mathrm{d}s &&-\int_{\Gamma} \Mu \cdot \U_2 \,\mathrm{d}s && ~ && = 0 \qquad && \forall \Mu \in \varLambda,
    \end{alignedat}
    \right.
\end{equation}
where $\varLambda$ is the trace space of $V_1$ on the interface $\Gamma$.
In contact mechanics, the linear saddle point formulation \eqref{eq:saddleform} corresponds to a single contact
iteration with zero initial gap
between the bodies $\Omega_1$ and $\Omega_2$.
In fact, the stability results
carry over to the variational inequality
formulations.
Formulation \eqref{eq:saddleform} is sometimes referred to as \emph{tie contact problem}.
%it is also present in two-body domain decomposition.

In principle, the Lagrange multiplier formulation
can accommodate nonmatching computational meshes and even different finite element spaces
over $\Gamma$,
and, thus, can be made quite generic
with respect to the discretization
of parts $\Omega_1$ and $\Omega_2$.
However, it is well known~\cite{babuvska1973finite} that saddle point system \eqref{eq:saddleform} is
stable only if a certain compatibility criterion, the Babu\v{s}ka--Brezzi condition,
holds for the finite element
discretizations of $(\U_1, \U_2)$ and $\Lam$.
The lack of stability can have various consequences from 
reduction in the asymptotic convergence rate to
a singular linear system.
Moreover, the instability
may manifest itself only for specific mesh configurations.
In dealing with mortar finite element methods,
this has led to the development of special finite element techniques
aiming at
proving stability independently of the finite element mesh.
These techniques include enriching the displacement field with bubble degrees-of-freedom~\cite{hauret2006bv, hauret2007discontinuous},
special biorthogonal finite element bases for the Lagrange multiplier~\cite{wohlmuth2000mortar, wohlmuth2011variationally, fritz2004comparison},
local modification of basis functions at the interface~$\Gamma$~\cite{bernardi1993domain, belgacem1998mortar, belgacem1999mortar},
or modification of the bilinear form using additional stabilizing terms~\cite{liu2010stabilized, gustafsson2019error, gustafsson2022stabilized, heintz2006stabilized}.
% For instance, by adding residual stabilization terms one may use any $L^2$-conforming Lagrange multiplier spaces because the discrete norm, in which uniform stability can be established, arises naturally from the bilinear form.

Based on the abundance of literature on such special stabilization techniques,
it could be easily inferred that they are
absolutely necessary
for successful approximation of tie contact problems.
However, many existing finite element solvers implement
piecewise-linear finite elements and,
therefore, it is natural
to consider also standard finite elements for the
mixed variational formulation and analyze their stability criteria.
The pair $P_1 - P_0$, i.e.~piecewise-linear displacement and
piecewise-constant Lagrange multiplier,
is not stable, cf.~\cite{wohlmuth2011variationally}.
However, continuous piecewise-linear elements 
for both the displacement and the Lagrange multiplier
can be made stable by special
modifications of the basis functions at Dirichlet boundaries
or
at interface junctions between three or more subdomains, cf.~\cite{belgacem1999mortar, belgacem1998mortar, hauret2006bv}.
We argue that in contact mechanics one is less likely to encounter
such configurations and proceed with the study of
the unmodified $P_1-P_1$ pair,
assuming that the bodies are not fixed
right next to the contact interface.

The purpose of this work is to study
the discretization of the saddle point problem \eqref{eq:saddleform}
using mixed and stabilized finite element methods and investigate
the stability and the convergence of the methods.
%We consider a problem motivated by tie contact with
% the interface $\Gamma$ separated
%from Dirichlet boundaries by Neumann boundary of nonzero measure --
%this
%is also a necessary condition for uniform stability of
%the mixed method~\cite{hauret2006bv, belgacem1998mortar, belgacem1999mortar}.
In the past, residual-based Barbosa--Hughes
stabilization~\cite{barbosa1991finite} 
has been applied to interface problems, e.g., for the
Poisson equation~\cite{hansbo2005lagrange} as well as
frictionless~\cite{hansbo2016least} and frictional contact~\cite{gustafsson2022stabilized}.
Here we introduce, to our knowledge for the first time, residual stabilized mortar method
for tie contact
or, equivalently, domain decomposition in linear
elasticity.
The method is proven stable
for any $L^2$-conforming finite element spaces which is
shown to lead to a quasi-optimality
error estimate.
Next we consider a mixed method using
a continuous Lagrange multiplier and
present criteria for the
stability of the lowest order $P_1-P_1$ method
under which we are able to prove the Babu\v{s}ka--Brezzi condition.
For additional verification, we present
novel numerical results comparing
the behavior of the mixed
and stabilized methods under uniform refinements,
and unique three-dimensional
results using an implementation of the mixed $P_1-P_1$ method in
Elmer~\cite{malinen2013elmer}, a suite of open source finite element solvers
for multiphysical problems.

% Cite existing work.

\section{Problem formulation}

The boundary $\partial \Omega_i$
is split into the Dirichlet part $\Gamma_{D,i}$, the Neumann part $\Gamma_{N,i}$, $i=1,2$,
and the interface $\Gamma$ between the two domains $\Omega_1$ and $\Omega_2$.
We assume that $\Gamma_{D,i}$
and $\Gamma$ are always separated
by $\Gamma_{N,i}$ to avoid using the trace space $H^{1/2}_{00}(\Gamma)$~\cite{tartar2007introduction}.
Moreover, if $\Gamma_{D,i}$ and $\Gamma$ are not separated,
then the finite element space associated
with the Lagrange multiplier in the mixed formulation
requires modification~\cite{hauret2006bv}.
%and $\Gamma = \partial \Omega_1 \cap \partial \Omega_2$.
%This assumption will simplify the variational formulation
%because the spaces are not dependent on the type of the
%boundary condition.

The strong formulation of the tie contact problem
reads as follows:
\begin{equation}
-\Div\Sig(\U_i) = \F \quad \text{in $\Omega_i$}, \label{eq:tie1}
\end{equation}
where
\begin{equation}
    \Sig(\U_i) = \frac{E}{2(1+\nu)} \Eps(\U_i) + \frac{E \nu}{(1 + \nu)(1 - 2\nu)} \mathrm{tr}\,\Eps(\U_i) \boldsymbol{I}, \quad \Eps(\U_i) = \frac{1}{2}(\nabla \U_i + \nabla \U_i^T), \label{eq:tie2}
\end{equation}
and $(E, \nu)$ are the material parameters
(Young's modulus and Poisson ratio).
The formulation is complemented with
fixed and zero traction
boundary conditions
\begin{equation}
\U_i = \Z \quad \text{on $\Gamma_{D,i}$}, \quad \Sig(\U_i)\N = \Z \quad \text{on $\Gamma_{N,i}$,} \label{eq:tie3}
\end{equation}
and the interface conditions
\begin{equation}
\Lam = \Sig(\U_2)\N = -\Sig(\U_1)\N \quad \text{on $\Gamma$} \label{eq:tie4}
\end{equation}
guaranteeing the continuity of the traction
across the interface $\Gamma$.

Let us define the function space $V_i = \{ \W \in H^1(\Omega_i)^d : \W|_{\Omega_{\Gamma_{D,i}}} = 0\}$
and the dual space $\varLambda = (H^{1/2}(\Gamma)^d)^\prime$
where $d \in \{2,3\}$ is the dimension of the domain
and $H^{1/2}(\Gamma)$ refers to the space of traces of functions in $H^1(\Omega)$
on $\Gamma$.
In the following, we use the notation
$\U = (\U_1, \U_2) \in V_1 \times V_2 = V$
and, consequently, $(\U, \Lam) = (\U_1, \U_2, \Lam) \in V \times \varLambda$.
The variational formulation of \eqref{eq:tie1}--\eqref{eq:tie4} then reads as: find $(\U, \Lam) \in V \times \varLambda$ satisfying
\begin{equation}
\label{eq:varform}
\left\{
\begin{aligned}
  (\Sig(\U_1), \Eps(\V_1))_{\Omega_1} + \langle \Lam, \V_1 \rangle &= (\F, \V_1)_{\Omega_1} \\
  (\Sig(\U_2), \Eps(\V_2))_{\Omega_2} - \langle \Lam, \V_2 \rangle &= (\F, \V_2)_{\Omega_2} \\
  \langle \Mu, \U_1 - \U_2 \rangle &= 0,
\end{aligned}
\right.
\end{equation}
for every $(\V, \Mu) \in V \times \varLambda$.
In \eqref{eq:varform}, $(\cdot,\cdot)_A$ denotes the $L^2$-inner product
over the subdomain $A \subset \Omega$
and $\langle \cdot, \cdot \rangle$ is the $H^{1/2}(\Gamma)^d$-duality pairing.
Equivalently, we write \eqref{eq:varform} as: find $(\U, \Lam) \in V \times \varLambda$ satisfying
\begin{equation}
\Bf(\U, \Lam; \V, \Mu) = \Lf(\V) \quad \forall (\V, \Mu) \in V \times \varLambda, \label{eq:bigvarform}
\end{equation}
where the bilinear form $\Bf : (V \times \varLambda) \times (V \times \varLambda) \rightarrow \mathbb{R}$ and the linear form $\Lf : V \rightarrow \mathbb{R}$ are defined as
\begin{align*}
  \Bf(\W, \X; \V, \Mu) &= \sum_{i=1}^2 (\Sig(\W_i), \Eps(\V_i))_{\Omega_i} + \langle \Mu, \jump{\W} \rangle + \langle \X, \jump{\V} \rangle, \\
  \Lf(\V) &= \sum_{i=1}^2 (\F, \V_i)_{\Omega_i},
\end{align*}
and $\jump{\W} = \W_1 - \W_2$ denotes the jump of the displacement across the interface $\Gamma$.
Existence and uniqueness of \eqref{eq:bigvarform}
are standard and follows the steps
laid out in \cite{babuvska1973finite}.

\section{Stabilized finite element method}

Let $\Th^i$ be a computational mesh of the domain $\Omega_i$, $i=1,2$, with the
mesh parameter $h$ and $\Gh$ be the set of edges/facets of $\Th^1$ on the
interface $\Gamma$.
To introduce the stabilized finite element method,
we define the discrete spaces
\begin{align*}
  V_{h,i} &= \{ \W \in V_i : \W|_T \in \mathcal{P}_k(T)^d~\forall T \in \Th^i \}, \quad V_h = V_{h,1} \times V_{h,2},\\
  \varLambda_h &= \{ \X \in \varLambda : \X|_E \in \mathcal{P}_l(E)^d~\forall E \in \Gh \},
\end{align*}
where $k \geq 1$ and $l \geq 0$ are the polynomial degrees.
We stress that the space $\varLambda_h$ is defined
using the edges/facets of $\Th^1$ only.

Let $h_\Gamma \in L^2(\Gamma)$ be such that $h_\Gamma|_E = h_E~\forall E \in \Gh$, and let $\alpha > 0$ be a stabilization parameter.
The discrete bilinear form $\Bf_h : (V_h \times \varLambda_h) \times (V_h \times \varLambda_h) \rightarrow \mathbb{R}$, defined as
\begin{align}
  \Bf_h(\W_h, \X_h; \V_h, \Mu_h) &= \Bf(\W_h, \X_h; \V_h, \Mu_h) - \alpha (h_\Gamma(\X_h + \Sig(\W_{1,h})\N), \Mu_h + \Sig(\V_{h,1})\N)_\Gamma, \label{eq:stabform}
\end{align}
is continuous and stable in the following mesh-dependent norm (see Theorem~\ref{thm:discstab} below):
\begin{equation}
\enorm{(\W_h, \X_h)}^2_h  = \enorm{\W_h}^2 + \|h_\Gamma^{1/2} \X_h\|_{0,\Gamma}^2. \label{eq:discnorm}
\end{equation}
Note that \eqref{eq:stabform} is stabilized
using the residual of \eqref{eq:tie4}.
In \eqref{eq:discnorm}, the norm for the displacement is given by the strain energy:
\[
\enorm{\W_h}^2 = \enorm{(\W_{1,h}, \W_{2,h})}^2 = \sum_{i=1}^2 (\Sig(\W_{i,h}), \Eps(\W_{i,h}))_{\Omega_i}.
\]

The stabilized finite element formulation now reads as follows: find $(\U_h, \Lam_h) \in V_h \times \varLambda_h$ satisfying
\[
\Bf_h(\U_h, \Lam_h; \V_h, \Mu_h) = \Lf(\V_h) \quad \forall (\V_h, \Mu_h) \in V_h \times \varLambda_h.
\]
The stability of the
formulation is proved
in the following theorem.
\begin{thm}
  \label{thm:discstab}
  For small enough $\alpha > 0$ and any $(\W_h, \X_h) \in V_h \times \varLambda_h$ there exists $C>0$, independent of $h$, such that
  \[
  \Bf_h(\W_h, \X_h; \W_h, -\X_h) \geq C \enorm{(\W_h, \X_h)}^2_h.
  \]
\end{thm}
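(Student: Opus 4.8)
The plan is to establish the stated coercivity bound by testing $\Bf_h$ with the canonical saddle-point pair $(\V_h,\Mu_h)=(\W_h,-\X_h)$: the indefinite Lagrange-multiplier contributions will cancel, the residual penalty will supply control of $\X_h$ in the $h_\Gamma$-weighted $L^2$-norm, and a discrete trace inequality will absorb the remaining stress term into the strain energy. Since $\varLambda_h$ is $L^2$-conforming, every pairing over $\Gamma$ occurring in this computation is an ordinary $L^2$ inner product, so no care with the $H^{1/2}$-duality is needed for discrete arguments.

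First I would expand $\Bf(\W_h,\X_h;\W_h,-\X_h)$. The two coupling terms combine to $\langle-\X_h,\jump{\W_h}\rangle+\langle\X_h,\jump{\W_h}\rangle=0$, leaving $\Bf(\W_h,\X_h;\W_h,-\X_h)=\enorm{\W_h}^2$. For the stabilization term, using $\V_{h,1}=\W_{1,h}$ and $\Mu_h=-\X_h$, the integrand is the pointwise product $(\X_h+\Sig(\W_{1,h})\N)\cdot(\Sig(\W_{1,h})\N-\X_h)=|\Sig(\W_{1,h})\N|^2-|\X_h|^2$ on $\Gamma$, so that
\[
\Bf_h(\W_h,\X_h;\W_h,-\X_h)=\enorm{\W_h}^2+\alpha\|h_\Gamma^{1/2}\X_h\|_{0,\Gamma}^2-\alpha\|h_\Gamma^{1/2}\Sig(\W_{1,h})\N\|_{0,\Gamma}^2 .
\]
The first two terms are precisely the components of $\enorm{(\W_h,\X_h)}_h^2$, and only the last, negative term must be controlled.

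The one genuinely technical step is the bound on that negative term. Here I would invoke the discrete trace (inverse) inequality on the trace mesh $\Gh$: on each $E\in\Gh$ with adjacent element $T_E\in\Th^1$, shape-regularity together with the fact that $\Sig(\W_{1,h})$ is piecewise polynomial gives $h_E\|\Sig(\W_{1,h})\N\|_{0,E}^2\le C\,\|\Sig(\W_{1,h})\|_{0,T_E}^2$; summing over $E\in\Gh$ and then using the boundedness and coercivity of the elasticity tensor (so that $\|\Sig(\W_{1,h})\|_{0,\Omega_1}^2 \lesssim (\Sig(\W_{1,h}),\Eps(\W_{1,h}))_{\Omega_1}\le\enorm{\W_h}^2$) yields an $h$-independent constant $C_I$ with $\|h_\Gamma^{1/2}\Sig(\W_{1,h})\N\|_{0,\Gamma}^2\le C_I\enorm{\W_h}^2$. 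This is where mesh regularity and the polynomial nature of $V_{h,1}$ enter, and it is the main obstacle; everything else is algebra.

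Combining, $\Bf_h(\W_h,\X_h;\W_h,-\X_h)\ge(1-\alpha C_I)\enorm{\W_h}^2+\alpha\|h_\Gamma^{1/2}\X_h\|_{0,\Gamma}^2$. Choosing $\alpha$ small enough that $\alpha C_I<1$ — this is the quantitative meaning of "small enough $\alpha$" — makes both coefficients strictly positive, and the assertion follows with $C=\min\{1-\alpha C_I,\alpha\}>0$, which is independent of $h$ since $C_I$ is.
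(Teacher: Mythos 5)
Your proof is correct and follows essentially the same route as the paper: cancel the coupling terms, expand the stabilization term into $\alpha\|h_\Gamma^{1/2}\X_h\|_{0,\Gamma}^2-\alpha\|h_\Gamma^{1/2}\Sig(\W_{1,h})\N\|_{0,\Gamma}^2$, and absorb the negative part via the discrete trace inequality for small $\alpha$. The only difference is that you sketch the proof of the trace inequality (which the paper merely cites) and your $C_I$ is the reciprocal of the paper's, neither of which affects the argument.
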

\begin{proof}
  The definition of the form $\Bf_h$ in \eqref{eq:stabform} and the discrete trace estimate (see, e.g., \cite{gustafsson2020nitsche})
  \[
  C_I \|h_\Gamma^{1/2} \Sig(\W_{1,h})\N \|_{0,\Gamma}^2 \leq (\Sig(\W_{1,h}), \Eps(\W_{1,h}))_{\Omega_1},
  \]
  where $C_I$
  is independent of $h$,
  imply that for any $(\W_h, \X_h) \in V_h \times \varLambda_h$ it holds
  \begin{align*}
    \Bf_h(\W_h, \X_h; \W_h, -\X_h) &= \enorm{(\W_h, \X_h)}^2_h - \alpha \|h_\Gamma^{1/2} \Sig(\W_{1,h})\N \|_{0,E}^2 \\
    &\geq \left(1 - \frac{\alpha}{C_I}\right) \enorm{\W_h}^2 + \alpha \|h_\Gamma^{1/2} \X_h \|_{0,\Gamma}^2.
  \end{align*}
  The terms on the right hand side remain positive if $0 < \alpha < C_I$.
\end{proof}

\begin{rem}
From the proof of Theorem~\ref{thm:discstab}, it is obvious that the result holds for any
$L^2$-conforming Lagrange multiplier
space, continuous or discontinuous.
One may even use non-local functions to define a global spectral basis.
This idea has been used, e.g., in~Hansbo et al.~\cite{hansbo2005nitsche, hansbo2005lagrange}
to simplify the implementation and circumvent
the need for a conforming
integration mesh.
Moreover, it is possible to eliminate a discontinuous, piecewise polynomial
Lagrange multiplier locally on each element to
arrive at a positive definite Nitsche-type method~\cite{gustafsson2017mixed, gustafsson2019error, gustafsson2020nitsche}.
\end{rem}

As a consequence of Theorem~\ref{thm:discstab}, we have the following
quasi-optimality estimate:
\begin{thm}
\label{thm:quasi} Suppose $\U \in H^{3/2}(\Omega_1)^d \times H^{3/2}(\Omega_2)^d$. There exists $C>0$ such that
   \begin{align*}
     &\enorm{(\U - \U_{h}, \Lam - \Lam_h)}_h \leq C \inf_{\substack{(\V_h, \Mu_h) \\ \in V_h \times \varLambda_h}} \enorm{(\U - \V_h, \Lam - \Mu_h)}_h.
   \end{align*}
\end{thm}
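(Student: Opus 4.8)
The plan is to follow the standard Strang-type argument for stabilized (consistent) mixed methods. First I would establish \emph{Galerkin consistency}: because the stabilization term in \eqref{eq:stabform} is built from the residual of the interface condition \eqref{eq:tie4}, and since the exact solution satisfies $\Lam + \Sig(\U_1)\N = \Z$ on $\Gamma$ in $L^2$ (here the regularity hypothesis $\U \in H^{3/2}(\Omega_1)^d \times H^{3/2}(\Omega_2)^d$ is exactly what makes $\Sig(\U_1)\N \in L^2(\Gamma)^d$, so that the added term is well-defined and vanishes), we have
\[
\Bf_h(\U, \Lam; \V_h, \Mu_h) = \Lf(\V_h) \quad \forall (\V_h, \Mu_h) \in V_h \times \varLambda_h,
\]
and subtracting the discrete equation gives the error orthogonality $\Bf_h(\U - \U_h, \Lam - \Lam_h; \V_h, \Mu_h) = 0$ for all discrete test functions.

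Next I would invoke the discrete inf-sup/coercivity of Theorem~\ref{thm:discstab}. Fix an arbitrary $(\V_h, \Mu_h) \in V_h \times \varLambda_h$ and write the discrete error as $(\U_h - \V_h, \Lam_h - \Mu_h) =: (\bs{e}_h, \bs{\eta}_h) \in V_h \times \varLambda_h$. Applying Theorem~\ref{thm:discstab} with this pair, then using error orthogonality to replace the discrete solution by the exact solution in one slot, yields
\[
C \enorm{(\bs{e}_h, \bs{\eta}_h)}_h^2 \le \Bf_h(\bs{e}_h, \bs{\eta}_h; \bs{e}_h, -\bs{\eta}_h) = \Bf_h(\U - \V_h, \Lam - \Mu_h; \bs{e}_h, -\bs{\eta}_h).
\]
The remaining task is to bound the right-hand side by $\enorm{(\U - \V_h, \Lam - \Mu_h)}_h \, \enorm{(\bs{e}_h, \bs{\eta}_h)}_h$ up to a constant, i.e.\ to prove \emph{continuity} of $\Bf_h$ in the mesh-dependent norm $\enorm{\cdot}_h$ on the relevant space. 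Cancelling one factor of $\enorm{(\bs{e}_h, \bs{\eta}_h)}_h$ and then using the triangle inequality $\enorm{(\U - \U_h, \Lam - \Lam_h)}_h \le \enorm{(\U - \V_h, \Lam - \Mu_h)}_h + \enorm{(\bs{e}_h, \bs{\eta}_h)}_h$, and finally taking the infimum over $(\V_h, \Mu_h)$, gives the claim.

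The main obstacle is the continuity estimate for $\Bf_h$ in the $\enorm{\cdot}_h$-norm, and in particular controlling the terms that involve $\Sig(\cdot)\N$ on $\Gamma$ applied to the \emph{non-discrete} argument $\U - \V_h$, since the discrete trace inverse estimate used in Theorem~\ref{thm:discstab} is not available for arbitrary $H^1$ functions. The standard remedy is to work on an augmented space $V(h) = V_h + (H^{3/2}(\Omega_1)^d \times H^{3/2}(\Omega_2)^d)$ equipped with the norm $\enorm{\cdot}_h$ enriched by the term $\sum_i \|h_\Gamma^{1/2}\Sig(\V_i)\N\|_{0,\Gamma}^2$ (which is dominated by $\enorm{\cdot}$ on $V_h$ by the inverse estimate, hence harmless there, and finite on the regular part by a trace theorem), prove boundedness of each term of $\Bf_h$ on $V(h)$ by Cauchy--Schwarz and the duality pairing bound $\langle \bs{\eta}_h, \jump{\V}\rangle \le \|h_\Gamma^{1/2}\bs{\eta}_h\|_{0,\Gamma}\|h_\Gamma^{-1/2}\jump{\V}\|_{0,\Gamma}$ together with a trace/interpolation bound on $\|h_\Gamma^{-1/2}\jump{\U-\V_h}\|_{0,\Gamma}$; alternatively one restricts attention to best approximants $\V_h$ that are interpolants, for which these trace quantities are controlled directly. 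Either way the enriched norm is equivalent to $\enorm{\cdot}_h$ on the approximation error by the inverse estimate, so the stated estimate follows; I would remark that the hidden constant depends on $\alpha$, $C_I$, and the trace constants but not on $h$.
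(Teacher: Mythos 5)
Your proposal follows essentially the same route as the paper's proof: Galerkin orthogonality (consistency of the residual stabilization under the $H^{3/2}$ regularity), the discrete stability of Theorem~\ref{thm:discstab} applied to $(\U_h-\V_h,\Lam_h-\Mu_h)$, continuity of $\Bf_h$ in the mesh-dependent norm, and a final triangle inequality. The only difference is that you spell out the continuity step (augmented norm on $V_h + H^{3/2}$, control of the $\Sig(\cdot)\N$ and jump traces) that the paper invokes without detail, which is a point in your favour rather than a deviation.
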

\begin{proof}
Discrete stability (Theorem~\ref{thm:discstab}), the orthogonality result
\[
\Bf_h(\U - \U_h, \Lam - \Lam_h; \V_h, \Mu_h) = 0 \quad \forall (\V_h, \Mu_h) \in V_h \times \varLambda_h,
\]
and continuity of $\Bf_h$,
imply that
\begin{align*}
&\enorm{(\U_{h} - \V_{h}, \Lam_h - \Mu_h)}_h^2 \\
&\leq C \Bf_h(\U_{h} - \V_{h}, \Lam_h - \Mu_h; \U_{h} - \V_{h}, \Lam_h - \Mu_h) \\
&= C \Bf_h(\U - \V_h, \Lam - \Mu_h; \U_h - \V_h, \Lam_h - \Mu_h) \\
&\leq C\|(\U - \V_h, \Lam - \Mu_h)\|_h \|(\U_h - \V_h, \Lam_h - \Mu_h)\|_h.
\end{align*}
Dividing both sides by $\|(\U_h - \V_h, \Lam_h - \Mu_h)\|_h$
and using the triangle inequality leads to the proof.
\end{proof}

\begin{rem}
It is possible to prove Theorem~\ref{thm:discstab} in the $V$-norm for the displacement and in the continuous $H^{-1/2}(\Gamma)$-norm for the Lagrange multiplier,
to arrive at a similar estimate without
extra regularity assumptions~\cite{gustafsson2017mixed}.
\end{rem}

\section{Mixed finite element method}

Let us next consider a mixed method
using a continuous
Lagrange multiplier, i.e.~the spaces
\begin{equation}
  \label{eq:clagmult}
  \begin{aligned}
        V_{h,i} &= \{ \W \in V_i : \W|_T \in \mathcal{P}_k(T)^d~\forall T \in \Th^i \}, \quad V_h = V_{h,1} \times V_{h,2},\\
      \varLambda_h^c &= \{ \X \in \varLambda \cap C(\Gamma)^d : \X|_E \in \mathcal{P}_l(E)^d~\forall E \in \Gh \},
      \end{aligned}
\end{equation}
where $k, l \geq 1$.
This approach is mentioned in the early work
of Pitkäranta~\cite{pitkaranta1979boundary}
where it is noted that
the discrete stability holds
if the space of Lagrange multipliers
is a subset of the trace space of $V_{h,1}$.
However, the
stability may not, in general, be
uniform in $h$
without
additional restrictions on the
finite element mesh.
%Next we make explicit
%some possible restrictions on the mesh.

The discrete mixed formulation reads as follows:
find $(\U_h, \Lam_h) \in V_h \times \varLambda_h^c$ satisfying
\[
\Bf(\U_h, \Lam_h; \V_h, \Mu_h) = \Lf(\V_h) \quad \forall (\V_h, \Mu_h) \in V_h \times \varLambda_h^c.
\]
The Babu\v{s}ka--Brezzi condition, guaranteeing stability is written as: there exists
$C>0$, independent of $h$, such that
\begin{equation}
  \label{eq:bb}
  \sup_{\V_h \in V_h} \frac{\langle \jump{\V_h}, \X_h \rangle}{\enorm{\V_h}} \geq C \| \X_h \|_{-\frac12} \quad \forall \X_h \in \varLambda_h^c.
\end{equation}
Here we establish the uniform stability for the mixed method
using the mesh-dependent norm in $\varLambda_h^c$ and the
following auxiliary result~\cite{gustafsson2017mixed}.
\begin{lem}
  If there exists $C > 0$, independent of $h$, such that
  \begin{equation}
    \label{eq:dbb}
      \sup_{\V_h \in V_h} \frac{\langle \jump{\V_h}, \X_h \rangle}{\enorm{\V_h}} \geq C \| h_\Gamma^{1/2} \X_h \|_{0,\Gamma}^2 \quad \forall \X_h \in \varLambda_h^c,
  \end{equation}
  then the Babu\v{s}ka--Brezzi condition \eqref{eq:bb} holds. \label{lem:step}
\end{lem}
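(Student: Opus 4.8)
The plan is to argue by a duality/interpolation comparison between the mesh-dependent norm $\|h_\Gamma^{1/2}\cdot\|_{0,\Gamma}$ and the continuous $H^{-1/2}(\Gamma)$-norm on the finite-dimensional space $\varLambda_h^c$, and then feed this into the assumed inf--sup bound \eqref{eq:dbb}. First I would recall the standard inverse-type inequality valid on quasi-uniform (or locally quasi-uniform) trace meshes: for any $\X_h \in \varLambda_h^c$ one has $\|\X_h\|_{-\frac12} \lesssim \|h_\Gamma^{1/2}\X_h\|_{0,\Gamma}$, with the constant independent of $h$. This is just the dual statement of the $H^{1/2}$ inverse estimate $\|h_\Gamma^{-1/2}\mu_h\|_{0,\Gamma}\gtrsim\|\mu_h\|_{\frac12}$ on the trace finite element space, obtained by testing the duality pairing $\langle\X_h,\mu\rangle$ against $\mu\in H^{1/2}(\Gamma)^d$, splitting $\mu$ via its $L^2$-projection onto $\varLambda_h^c$ and its complement, and using the stability of that projection together with a scaled trace/inverse inequality.

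Next, starting from \eqref{eq:dbb}, for fixed $\X_h$ pick (or pass to the supremum with) $\V_h\in V_h$ realizing the left-hand side up to a constant. Then
\[
\sup_{\V_h \in V_h} \frac{\langle \jump{\V_h}, \X_h \rangle}{\enorm{\V_h}} \;\geq\; C\,\|h_\Gamma^{1/2}\X_h\|_{0,\Gamma}^2
\;\geq\; C'\,\|h_\Gamma^{1/2}\X_h\|_{0,\Gamma}\,\|\X_h\|_{-\frac12},
\]
where the last step uses the inverse inequality of the previous paragraph in the form $\|\X_h\|_{-\frac12}\le C_{\mathrm{inv}}\|h_\Gamma^{1/2}\X_h\|_{0,\Gamma}$. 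Now I would like to cancel one factor of $\|h_\Gamma^{1/2}\X_h\|_{0,\Gamma}$ against a matching lower bound for the left-hand side. For that I invoke the trivial direction $\|h_\Gamma^{1/2}\X_h\|_{0,\Gamma}\gtrsim\|\X_h\|_{-\frac12}$? No — that inequality goes the wrong way; instead I bound the numerator directly: $\langle\jump{\V_h},\X_h\rangle\le\|\jump{\V_h}\|_{\frac12}\|\X_h\|_{-\frac12}\lesssim\enorm{\V_h}\|\X_h\|_{-\frac12}$ by the trace theorem and Korn's inequality, hence
\[
\sup_{\V_h \in V_h} \frac{\langle \jump{\V_h}, \X_h \rangle}{\enorm{\V_h}} \;\leq\; C''\,\|\X_h\|_{-\frac12}.
\]
Combining the displayed lower and upper bounds gives $C''\|\X_h\|_{-\frac12}\ge C'\|h_\Gamma^{1/2}\X_h\|_{0,\Gamma}\|\X_h\|_{-\frac12}$, so after dividing by $\|\X_h\|_{-\frac12}$ (for $\X_h\neq 0$) we recover $\|h_\Gamma^{1/2}\X_h\|_{0,\Gamma}\le (C''/C')\|\X_h\|_{-\frac12}$, i.e.\ the two norms are equivalent on $\varLambda_h^c$ uniformly in $h$. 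Substituting this equivalence back into \eqref{eq:dbb} turns its right-hand side into $C\|h_\Gamma^{1/2}\X_h\|_{0,\Gamma}^2\gtrsim C\|h_\Gamma^{1/2}\X_h\|_{0,\Gamma}\|\X_h\|_{-\frac12}$ and, cancelling the common factor as above, yields exactly \eqref{eq:bb}.

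The main obstacle is the inverse inequality $\|\X_h\|_{-\frac12}\lesssim\|h_\Gamma^{1/2}\X_h\|_{0,\Gamma}$ with an $h$-independent constant: this requires (local) quasi-uniformity of the trace mesh $\Gh$ and a careful duality argument on the fractional-order space, since $H^{-1/2}$ is not localizable elementwise. I would either cite the standard result (e.g.\ along the lines of the references already used in the paper) or sketch it via the $L^2$-stable projection onto $\varLambda_h^c$; the rest of the argument is elementary manipulation of the inf--sup quotient together with the trace theorem and Korn's inequality, both of which are classical on the polygonal subdomains $\Omega_i$.
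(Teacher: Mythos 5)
Your argument hinges on the inequality $\|\X_h\|_{-\frac12}\lesssim\|h_\Gamma^{1/2}\X_h\|_{0,\Gamma}$ with an $h$-independent constant, and this inequality is false. The inverse estimate for piecewise polynomials on a quasi-uniform trace mesh goes the \emph{other} way, $\|h_\Gamma^{1/2}\X_h\|_{0,\Gamma}\leq C\|\X_h\|_{-\frac12}$. To see that your direction cannot hold uniformly, take $\X_h$ equal to a nonzero constant field on $\Gamma$: then $\|\X_h\|_{-\frac12}$ is a fixed positive number independent of $h$, while $\|h_\Gamma^{1/2}\X_h\|_{0,\Gamma}\sim h^{1/2}\to 0$. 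Consequently the two norms are \emph{not} uniformly equivalent on $\varLambda_h^c$; if they were, the lemma would be vacuous, since \eqref{eq:dbb} and \eqref{eq:bb} could then be interchanged trivially --- the whole content of the lemma is the passage from the weaker mesh-dependent bound to the stronger continuous one. (There is also an algebraic slip at the end: from $C''\|\X_h\|_{-\frac12}\geq C'\|h_\Gamma^{1/2}\X_h\|_{0,\Gamma}\|\X_h\|_{-\frac12}$, dividing by $\|\X_h\|_{-\frac12}$ yields the $h$-independent bound $\|h_\Gamma^{1/2}\X_h\|_{0,\Gamma}\leq C''/C'$, not the norm comparison you state.)

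The intended proof, as the text following the lemma indicates, is Verf\"urth's trick: given $\X_h$, use the \emph{continuous} inf-sup condition to find $\V\in V$ with $\enorm{\V}=1$ and $\langle\jump{\V},\X_h\rangle\geq c\,\|\X_h\|_{-\frac12}$, and replace $\V$ by its Cl\'ement interpolant $\V_h\in V_h$, which satisfies $\enorm{\V_h}\leq C$ together with the local error bound $\|h_\Gamma^{-1/2}\jump{\V-\V_h}\|_{0,\Gamma}\leq C$. Then
\[
\langle\jump{\V_h},\X_h\rangle \;\geq\; c\,\|\X_h\|_{-\frac12} - \|h_\Gamma^{-1/2}\jump{\V-\V_h}\|_{0,\Gamma}\,\|h_\Gamma^{1/2}\X_h\|_{0,\Gamma}
\;\geq\; c\,\|\X_h\|_{-\frac12} - C\,\|h_\Gamma^{1/2}\X_h\|_{0,\Gamma},
\]
and the last term is absorbed by adding a suitable multiple of the assumed bound \eqref{eq:dbb}. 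No inverse inequality in $H^{-1/2}(\Gamma)$ is needed, which is precisely why the lemma holds for shape-regular mesh families without quasi-uniformity; your route, by contrast, would both require quasi-uniformity and still fail on the false key estimate.
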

Lemma~\ref{lem:step} can be proven independently of the finite element spaces for any shape regular mesh families using the stability of the continuous problem and Cl\'{e}ment interpolation, cf.~\cite{gustafsson2017mixed}.
The following assumption on the finite element mesh $\Gh$
is sufficient for stability
in the mesh-dependent norm,
% for uniform mesh
% families.
% \begin{ass}
% \label{ass:1}
% There exists $C_1, C_2 > 0$ satisfying
% \[
% C_1 \min_{F \in \Gh} h_F \leq h_E \leq C_2 \max_{F \in \Gh} h_F \quad \forall E \in \Gh.
% \]
% \end{ass}
% Unfortunately, Assumption~\ref{ass:1} is not true for graded or adaptive finite element meshes.
% Instead, we use a more general
% assumption on the stability of the $L^2$ projection,
see \cite{bank2014h} for
a discussion on similar estimates with graded meshes.

\begin{ass}
\label{ass:projstab}
  The $L^2$-projection
  $\pi_h : L^2(\Gamma)^d \rightarrow \varLambda_h^c$
  satisfies
  \[
    \| h^{-1/2}_\Gamma \pi_h \bs{v} \|_{0,\Gamma} \leq C \| h^{-1/2}_\Gamma \bs{v} \|_{0,\Gamma} \quad \forall \V \in L^2(\Gamma)^d.
  \]
\end{ass}

\begin{thm}
  If $k=l=1$ in \eqref{eq:clagmult}
  and the finite element mesh satisfies Assumption~\ref{ass:projstab}
  then the stability condition \eqref{eq:dbb} holds.\label{thm:mixstab}
\end{thm}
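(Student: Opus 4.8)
The plan is to verify the discrete inf--sup condition \eqref{eq:dbb} by constructing, for each $\X_h \in \varLambda_h^c$, an explicit test displacement $\V_h \in V_h$ whose jump across $\Gamma$ ``sees'' $\X_h$ in a controlled way. The natural candidate is to take $\V_h$ supported near $\Gamma$ on the mesh $\Th^1$ only, setting $\V_{h,2} = \Z$ and choosing $\V_{h,1}$ so that its trace on $\Gamma$ equals (an $h_\Gamma$-weighted multiple of) $\X_h$ itself. Since $k = l = 1$ and $\X_h$ is continuous piecewise-linear on $\Gh$, the facets of $\Th^1$, the trace of a $P_1$ function on $\Th^1$ can exactly reproduce $\X_h$; the boundary values only determine $\V_{h,1}$ on the layer of elements touching $\Gamma$, and we extend by zero into the interior, which is admissible because $\Gamma$ is separated from $\Gamma_{D,1}$ by $\Gamma_{N,1}$ (no conflicting constraints). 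Concretely I would set $\jump{\V_h} = \V_{h,1}|_\Gamma$ equal to $h_\Gamma \X_h$ read off nodewise, so that $\langle \jump{\V_h}, \X_h \rangle$ is a quadrature-exact (up to equivalence of norms on the finite-dimensional facet spaces) approximation of $\|h_\Gamma^{1/2}\X_h\|_{0,\Gamma}^2$ from below.

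The key steps, in order, are: (i) define $\V_{h,1}$ as the $P_1$ extension-by-zero of the nodal values of $h_\Gamma \X_h$ on $\Gh$, and $\V_{h,2} = \Z$; (ii) bound $\langle \jump{\V_h}, \X_h\rangle$ from below by $C\|h_\Gamma^{1/2}\X_h\|_{0,\Gamma}^2$ using equivalence of the $L^2(E)$ norm and the discrete nodal norm on each facet $E$ together with an inverse-type argument relating neighboring facets when $\X_h$ is continuous; (iii) bound the energy norm $\enorm{\V_h}$ from above --- this is where Assumption~\ref{ass:projstab} enters. A standard discrete extension/trace estimate gives $\enorm{\V_{h,1}}^2 \leq C \|h_\Gamma^{-1/2}\V_{h,1}\|_{0,\Gamma}^2$ for the one-layer extension, and since $\V_{h,1}|_\Gamma$ is essentially $h_\Gamma\X_h$ we get $\enorm{\V_h}^2 \leq C\|h_\Gamma^{1/2}\X_h\|_{0,\Gamma}^2$; (iv) combine (ii) and (iii): the quotient in \eqref{eq:dbb} is at least $C\|h_\Gamma^{1/2}\X_h\|_{0,\Gamma}^2 / \|h_\Gamma^{1/2}\X_h\|_{0,\Gamma} = C\|h_\Gamma^{1/2}\X_h\|_{0,\Gamma}$, which after squaring the stated form (or absorbing into constants) yields exactly \eqref{eq:dbb}.

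The role of Assumption~\ref{ass:projstab} deserves a closer look, and this is the main obstacle. On a uniformly refined mesh the bounds in (iii) follow from standard scaling, but on a graded mesh the facet sizes $h_E$ vary across $\Gamma$, and the extension of a function living on $\Gh$ into $\Th^1$ can have energy that is not controlled by the local $h_\Gamma^{-1/2}$-weighted trace norm unless the mesh grading is compatible in a precise sense. The $L^2$-projection stability in Assumption~\ref{ass:projstab} is exactly the hypothesis that encodes this compatibility: it allows one to replace an arbitrary extension by the stable one and to commute the weight $h_\Gamma^{\pm 1/2}$ past the projection. So the delicate part of the write-up is to make precise that the chosen $\V_{h,1}$ (which is not literally $\pi_h$ of anything, but a nodal extension) has energy controlled by the projection-stable quantity --- I would phrase step (iii) by first replacing $\V_{h,1}|_\Gamma$ with $\pi_h(h_\Gamma\X_h)$ where needed, invoking Assumption~\ref{ass:projstab} to pass between $\|h_\Gamma^{-1/2}\cdot\|_{0,\Gamma}$ norms before and after projection, and only then applying the (mesh-uniform) discrete trace/extension inequality. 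Everything else is routine scaling on shape-regular elements.
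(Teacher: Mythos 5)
Your overall strategy coincides with the paper's: set $\V_{h,2}=\Z$, take for $\V_{h,1}$ the discrete zero-extension into $\Omega_1$ of a suitable $h_\Gamma$-weighted copy of $\X_h$, bound the pairing from below and the energy from above via Korn's inequality and the discrete extension estimate $\|\nabla\mathcal{E}_h\Mu_h\|_{0,\Omega_1}^2\le C\|h_\Gamma^{-1/2}\Mu_h\|_{0,\Gamma}^2$. Where you diverge is in the choice of the trace, and this is exactly where your write-up goes astray. The paper takes the trace to be $\pi_h(h_\Gamma\X_h)$, the $L^2$-projection onto $\varLambda_h^c$; then, because $\X_h\in\varLambda_h^c$, the pairing is an exact identity, $\langle\pi_h(h_\Gamma\X_h),\X_h\rangle=\langle h_\Gamma\X_h,\X_h\rangle=\|h_\Gamma^{1/2}\X_h\|_{0,\Gamma}^2$, so no lower bound via nodal-norm equivalence is needed at all. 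Assumption~\ref{ass:projstab} is then consumed in exactly one place: to bound $\|h_\Gamma^{-1/2}\pi_h(h_\Gamma\X_h)\|_{0,\Gamma}\le C\|h_\Gamma^{1/2}\X_h\|_{0,\Gamma}$. That is a statement about the weighted stability of the \emph{nonlocal} $L^2$-projection on graded meshes; it has nothing to do with controlling the energy of the zero-extension, which is a purely local, shape-regularity estimate valid on arbitrarily graded meshes. Your step (iii) therefore misattributes the role of the hypothesis.

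Two further points on your nodal-interpolant variant. First, ``the nodal values of $h_\Gamma\X_h$'' are not well defined: $h_\Gamma$ is piecewise constant on $\Gh$ and jumps at facet boundaries, so you must choose some nodal averaging of adjacent facet sizes before interpolating. Second, with that fixed, your step (ii) lower bound amounts to positivity, uniformly in $h$, of a weighted facet mass matrix whose off-diagonal entries carry mixed weights; this holds only if adjacent facet sizes are comparable (local quasi-uniformity, which does follow from shape regularity of $\Th^1$), and you only gesture at this with ``an inverse-type argument.'' If you carry your construction through carefully it actually yields the result from shape regularity alone, without Assumption~\ref{ass:projstab} --- which is a legitimate (arguably stronger) alternative, but then a correct write-up must not invoke the assumption where it is not needed, and as submitted the argument neither proves the lower bound in (ii) nor uses the hypothesis in the place where the paper's proof genuinely requires it. Your closing suggestion to ``replace $\V_{h,1}|_\Gamma$ with $\pi_h(h_\Gamma\X_h)$ where needed'' is, in effect, the paper's proof; committing to it from the start removes all of the above difficulties.
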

\begin{proof}
 Let
 $\mathcal{E}_h \X_h$,
 where
 $\mathcal{E}_h : \varLambda_h^c \rightarrow V_{h,1}$,
 be the extension of a piecewise
 linear function $\X_h$ on $\Gamma$ to $\Omega_1$ defined via the finite element
 basis so that the nodes outside of the interface $\Gamma$ have value zero.
 For such an extension it holds \cite{verfurth2013}
 \begin{equation}
   \| \nabla \mathcal{E}_h \Mu_h \|_{0,\Omega_1}^2 \leq C_E \| h_\Gamma^{-1/2} \Mu_h \|_{0,\Gamma}^2 \quad \forall \Mu_h \in \varLambda_h^c. \label{eq:mixpf1}
 \end{equation}
 From the definition of the $L^2$-projection, choosing $\W_h = (\W_{1,h}, 0) = (\mathcal{E}_h \pi_h(h_\Gamma \X_h), 0)$,
 it follows that
 \begin{equation}
    \langle \jump{\W_h}, \X_h \rangle = \langle \pi_h(h_\Gamma \X_h), \X_h \rangle = \| h_\Gamma^{1/2} \X_h \|_{0,\Gamma}^2. \label{eq:mixpf2}
 \end{equation}
 Using Korn's inequality, property \eqref{eq:mixpf1} and Assumption~\ref{ass:projstab},
 we obtain
 \begin{equation}
 \label{eq:mixpf3}
 \begin{aligned}
    \enorm{\W_h}^2 &= (\Sig(\W_{1,h}), \Eps(\W_{1,h})_{\Omega_1} \\
    &\leq C_K \| \nabla \mathcal{E}_h \pi_h(h_\Gamma \X_h) \|_{0,\Omega_1}^2 \\
    &\leq C_K C_E \| h_\Gamma^{-1/2} \pi_h(h_\Gamma \X_h) \|_{0,\Gamma}^2 \\
    &\leq C_K C_E C \| h_\Gamma^{1/2} \X_h \|_{0,\Gamma}^2.
 \end{aligned}
 \end{equation}
 The result now follows from \eqref{eq:mixpf2} and \eqref{eq:mixpf3}.
\end{proof}

\begin{rem}
As a stronger alternative to
Assumption~\ref{ass:projstab},
we can assume that
there exist $C_1, C_2 > 0$,
independent of $h$, satisfying
\begin{equation}
\label{eq:assumption2}
 C_1 \min_{F \in \Gh} h_F \leq h_E \leq C_2 \max_{F \in \Gh} h_F \quad \forall E \in \Gh.
\end{equation}
Unfortunately, \eqref{eq:assumption2} requires the mesh to be uniform.
\end{rem}

\begin{rem}
For $P_{k} - P_{k-1}$, $k\geq 2$,
the proof of Theorem~\ref{thm:mixstab}
can be adapted, see also
 \cite{seshaiyer2000uniform}
 where similar methods are discussed for multiple subdomains.
 For $P_{k} - P_{k-2}$, $k\geq 2$,
 with a discontinuous Lagrange multiplier, the proof simplifies for $d=2$
 because we can use the edge bubbles of $P_k$ on $\Gamma$.
\end{rem}

\section{Numerical experiments}

In this section, we perform experiments using
both methods and different polynomial orders.
We refer to the method using polynomial order $k$
for the displacement and $l$ for the continuous Lagrange
multiplier as $P_k - P_l$ mixed method.
% For clarity,
% we indicate by $P_k - P_{l,\mathrm{d}}$
% when the Lagrange multiplier is discontinuous for $l \geq 1$.
The two-dimensional examples are implemented using scikit-fem~\cite{skfem2020}
and visualized using matplotlib~\cite{hunter2007matplotlib}
whereas the three-dimensional examples are implemented using the finite element software Elmer~\cite{malinen2013elmer}
and visualized using Paraview~\cite{ahrens2005paraview}.
In all examples
the material parameters are taken to be $E=10^3$ and
$\nu=0.3$ in both bodies.
The source code and problem definition files for
reproducing the numerical examples are
available in~\cite{gustafsson_tom_2022}.

\subsection{Square against square, unstable example}

For completeness, we first demonstrate
the instability of $P_1-P_0$
mixed method and how stabilization recovers
the expected rate of convergence.
The domain is
\[
    \Omega_1 \cup \Omega_2 = (0, 1)^2 \cup (1, 1.5) \times (0, 1).
\]
The interface is
\[
    \Gamma = \{ (1, y) : 0 < y < 1 \}
\]
while the boundaries are
\[
    \Gamma_{D,1} = \{ (0, y) : 0 < y < 1 \}, \quad
    \Gamma_{D,2} = \{ (1.5, y) : 0 < y < 1 \}, \quad
    \Gamma_{N,i} = \partial \Omega_i \setminus (\Gamma_{D,i} \cup \Gamma),
\]
and the essential boundary conditions are given by
\[
   \U_1|_{\Gamma_{D,1}} = (0.1, 0), \quad \U_2|_{\Gamma_{D,2}} = (0, 0).
\]
First, we demonstrate that the instability of $P_1 - P_0$ mixed method with a piecewise
constant Lagrange multiplier is clearly visible
when using matching vertices.
The initial meshes are given in Figure~\ref{fig:mesh1}.
The Lagrange multipliers are visualized
in Figure~\ref{fig:lagmult} from which it
becomes clear that the tangential Lagrange multiplier
does not properly represent the true solution.
This is further exemplified in Figure~\ref{fig:p1} (left)
where we observe that the convergence rate of the Lagrange multiplier
is seriously affected
by the lack of stabilization.

For the stabilized method, we
observe superconvergence of the Lagrange multiplier with the error
of the order $\mathcal{O}(h^{3/2})$ --
instead of $\mathcal{O}(h)$ as suggested by the linear
approximation of the primal variable.
The reference solution was calculated using
a stabilized $P_3-P_2$ method and a very fine mesh.

\subsection{Square against square, $P_1-P_1$ methods}

\label{sec:squaresquare}

In order to demonstrate the stability of the mixed $P_1-P_1$ method,
we solve the same problem as above using both the mixed method and
its stabilized counterpart using a continuous Lagrange multiplier.
A nonmatching mesh is used
to demonstrate the approach in the case of general meshes.
The convergence rates are in
Figure~\ref{fig:p1} (right) and the Lagrange multipliers
are visualized in Figure~\ref{fig:clagmult}.
As a conclusion, both methods give essentially the
same results. As above, we observe superconvergence of the
Lagrange multiplier with the error of order $\mathcal{O}(h^2)$.

\subsection{Three-dimensional examples}

The mixed $P_1-P_1$ method is implemented in Elmer~\cite{malinen2013elmer}
for three-dimensional geometry.
We first present an example where a cylinder with the radius and height of 0.5 and 1, respectively,
is pushed against a cube with side length of 1, see Figure~\ref{fig:meshes1}.
The external force is introduced through inhomogeneous Dirichlet boundary condition, i.e.~the
bottom surface of the block is fixed whereas the top surface of the cylinder is
forced downwards 0.1 units.
Consequently, the exact solution is independent of the cylinder's axial rotation
and the discrete problem can be solved
using several angles to observe how the
orientation of elements affects
the contact force.

The cylinder is discretized using linear tetrahedral elements in Gmsh~\cite{geuzaine2009gmsh} and
the block is discretized using trilinear hexahedral tensor product grid.
The normal and tangential components of the contact force are given for three different
values of cylinder's axial rotation
(0, $\pi/6$ and $\pi/3$)
in Figures \ref{fig:normals1} and \ref{fig:tangents1}.
All rotation angles lead to practically
identical normal and tangential contact forces
which suggests that the method
is robust with respect to the shape
and the orientation of the elements
at the contact interface.

%The next example reproduces
%the
%The next example is otherwise similar but uses two blocks
%\[
%\Omega_1 = (-0.5, 0.5) \times (-0.5, 0.5) \times (-1, 0) \quad \text{and} \quad
%\Omega_2 = (-1.4, 1.4) \times (-1.4, 1.4) \times (0, 1),
%\]
%both meshed using trilinear hexahedral elements,
%see Figure~\ref{fig:meshes2}.
%The problem is again solved at several angles
%of rotation for the smaller cube.
%The resulting contact forces are given in
%Figures~\ref{fig:normals2} and \ref{fig:tangents2}.

The second three-dimensional example concerns a
more complicated
geometry with a cylindrical shaft
cutting through a block.
The cylinder
has radius $0.25$ and height 1,
and the block is a cube with side length of 1, see
Figure~\ref{fig:meshes3}.
The block is squeezed 0.1 units using a similar boundary condition as before.
The cylinder is rotated around
its axis and
results are obtained for several angles of rotation.
The contact forces 
are given in Figures~\ref{fig:normals3} and \ref{fig:tangents3} demonstrating
once again the independence of the
results on the finite element mesh.

The final example is a
three-dimensional version
of the convergence study in
Section~\ref{sec:squaresquare}.
More precisely,
a cubical block with side length of 1 is pushed downward against another block
with same horizontal dimensions and of height 0.5.
As we do not possess 
a sufficiently accurate
reference solution,
we compute the strain energy
at different mesh refinement
levels and visualize
its change as a
function of the mesh parameter,
see Figure~\ref{fig:strainconv}
where a linear rate is
demonstrated.
The corresponding contact forces 
are shown in Figures~\ref{fig:normloads} and \ref{fig:sliploads}.

\section{Discussion}

Mortar finite element methods and tie contact problems are
naturally presented in
a mixed variational formulation
and their discretizations often use %discretize
%most methods discretize
%the mixed formulation using
special finite element spaces.
In this paper,
we have presented two finite element formulations,
mixed and stabilized,
for the approximation of the elastic tie contact problem.
Both use standard finite element spaces
and are stable for continuous $P_1 - P_1$
elements, the mixed method
under certain mild assumptions on the
boundary conditions and the finite
element mesh, and lead
to similar numerical results.
The mixed method has been implemented
for general two- and three-dimensional contact problems in Elmer~\cite{malinen2013elmer}.

%The mixed method leads to the block matrix
%\[
%\begin{bmatrix}
%K_1 & 0 & M \\
%0 & K_2 & -B \\
%M^T & -B^T & 0
%\end{bmatrix}
%\]
%where $K_i$ is the standard finite element stiffness matrix
%for body $\Omega_i$ and $M$ consists of the columns of a standard boundary mass matrix on $\partial \Omega_1$,
%i.e.~the columns that match the nodes on the interface $\Gamma$.
%The matrix $B$ arises from the inner product
%\[
%\int_\Gamma \Mu_h \cdot \U_{2,h} \,\mathrm{d}s
%\]
%which requires simultaneous numerical integration of basis functions
%from $V_{h,1}$ and $V_{h,2}$.
%The implementation in Elmer relies on projection of the quadrature points

\section*{Acknowledgements}

The work was supported by the Academy of Finland (Decisions 324611 and 338341)
and the Portuguese government through FCT (Funda\c{c}\~ao para a Ci\^encia e a Tecnologia), I.P., under the projects PTDC/MAT-PUR/28686/2017 and UIDB/04459/2020.

% \subsection{Block against block, quadratic displacement}

% \subsection{T-shaped domain}

\begin{figure}
    \centering
    \includegraphics[width=0.4\textwidth]{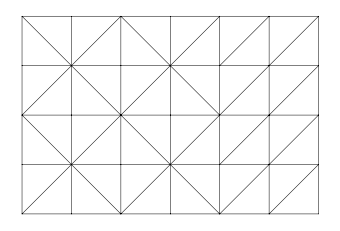}
    \includegraphics[width=0.37\textwidth]{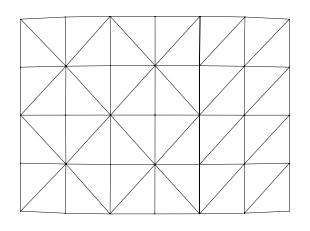}
    \caption{The initial meshes (left) and the deformed meshes (right)
    with matching nodes for the first numerical experiment.}
    \label{fig:mesh1}
\end{figure}

\begin{figure}
    \centering
    \includegraphics[width=0.49\textwidth]{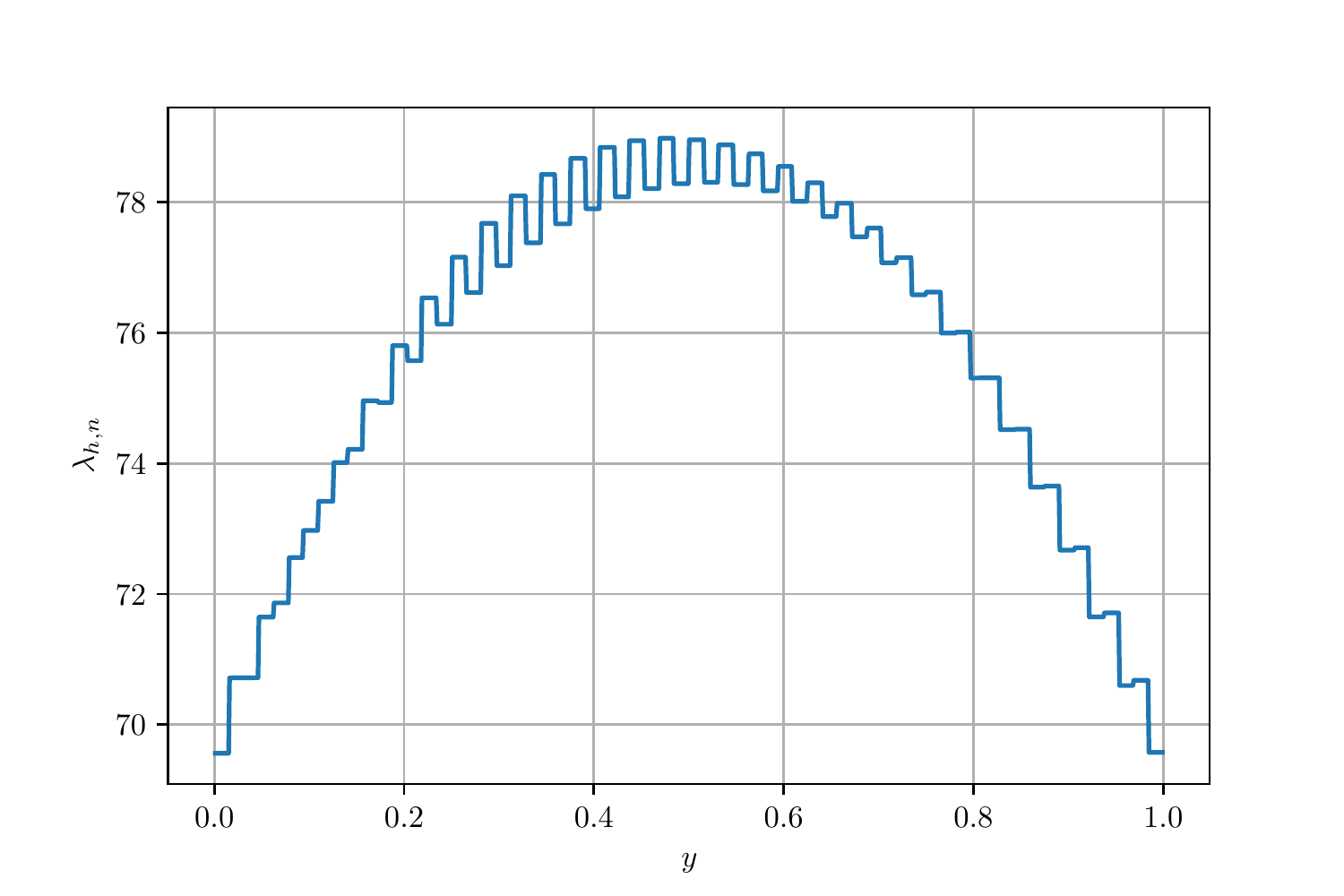}
    \includegraphics[width=0.49\textwidth]{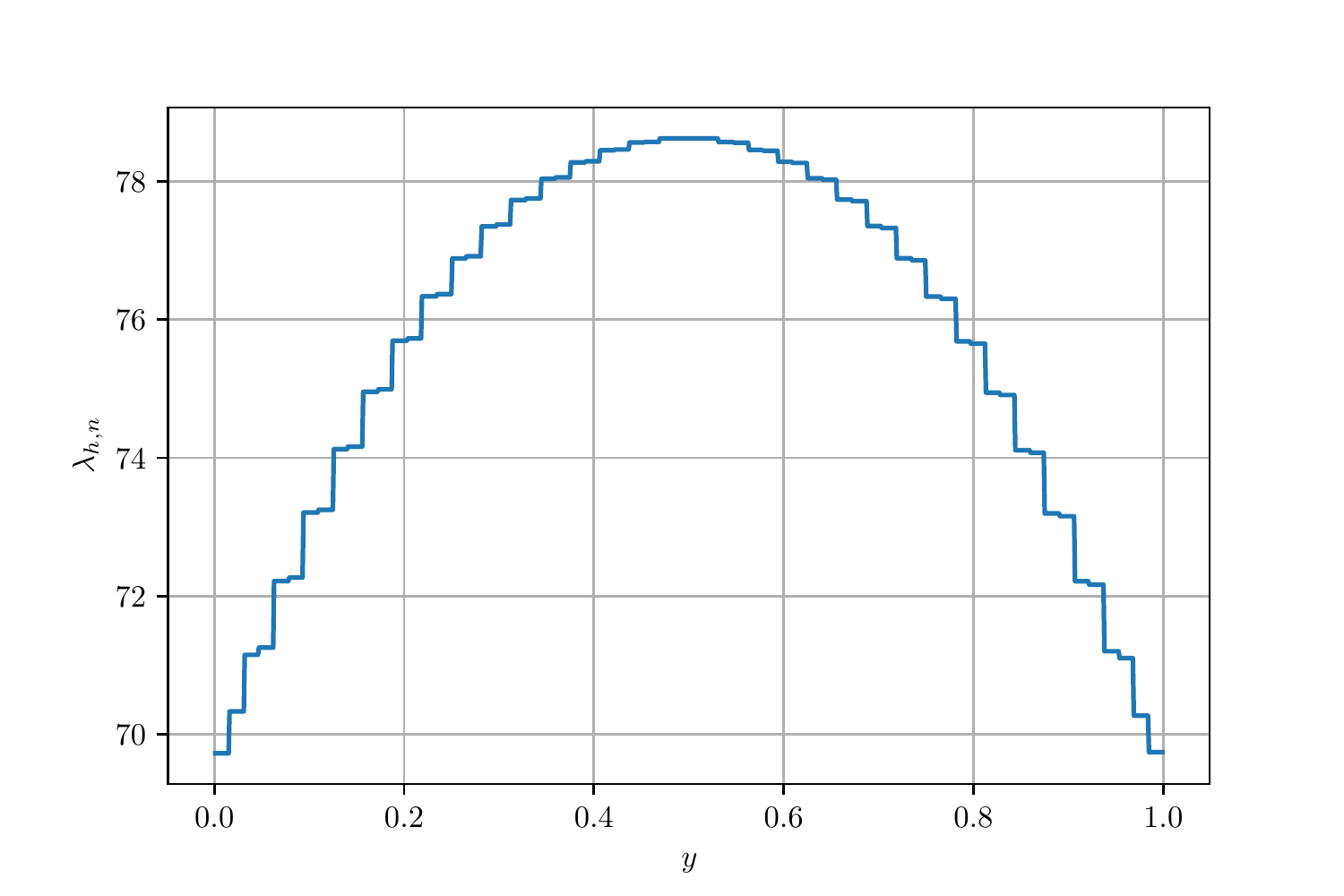}\\
    \includegraphics[width=0.49\textwidth]{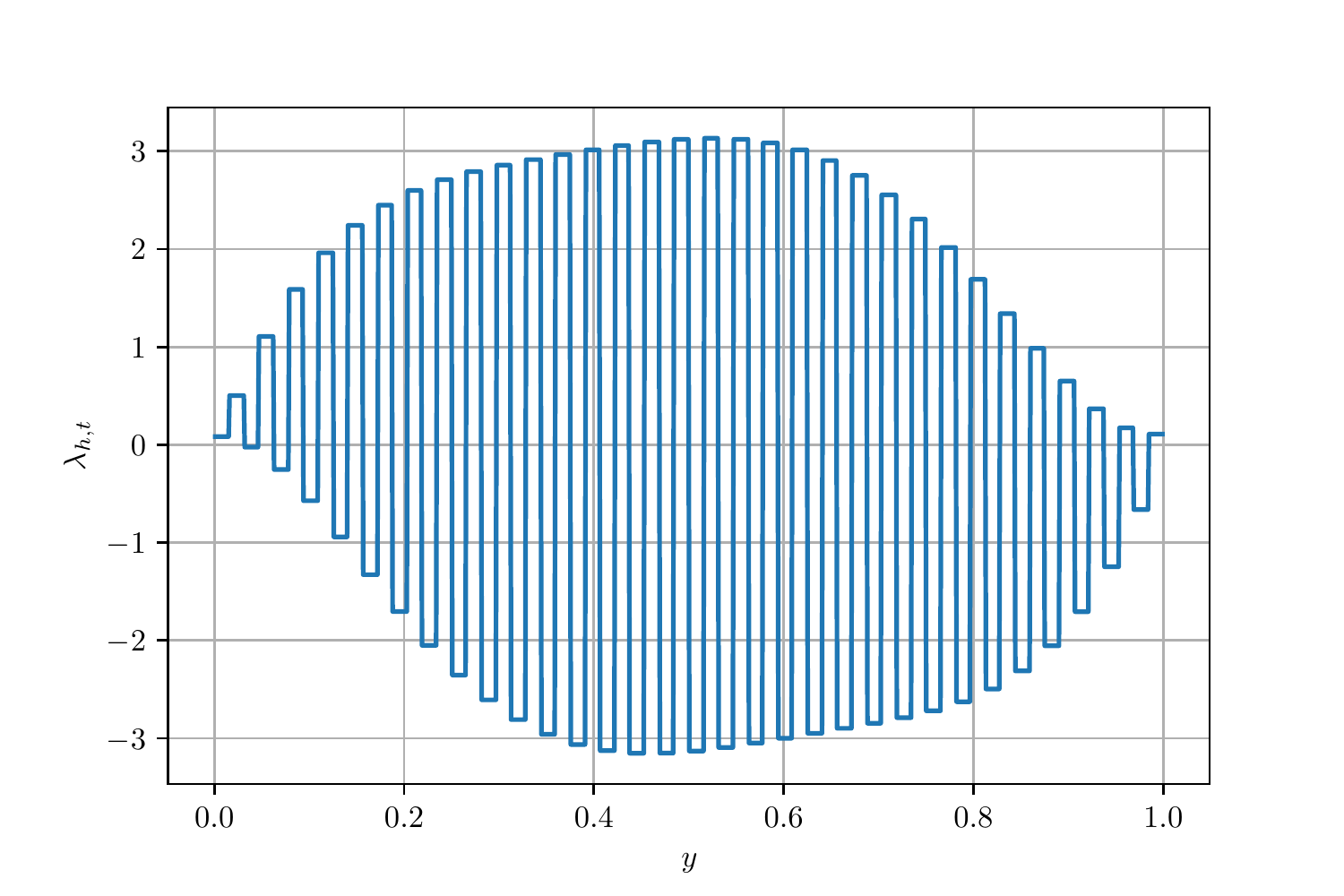}
    \includegraphics[width=0.49\textwidth]{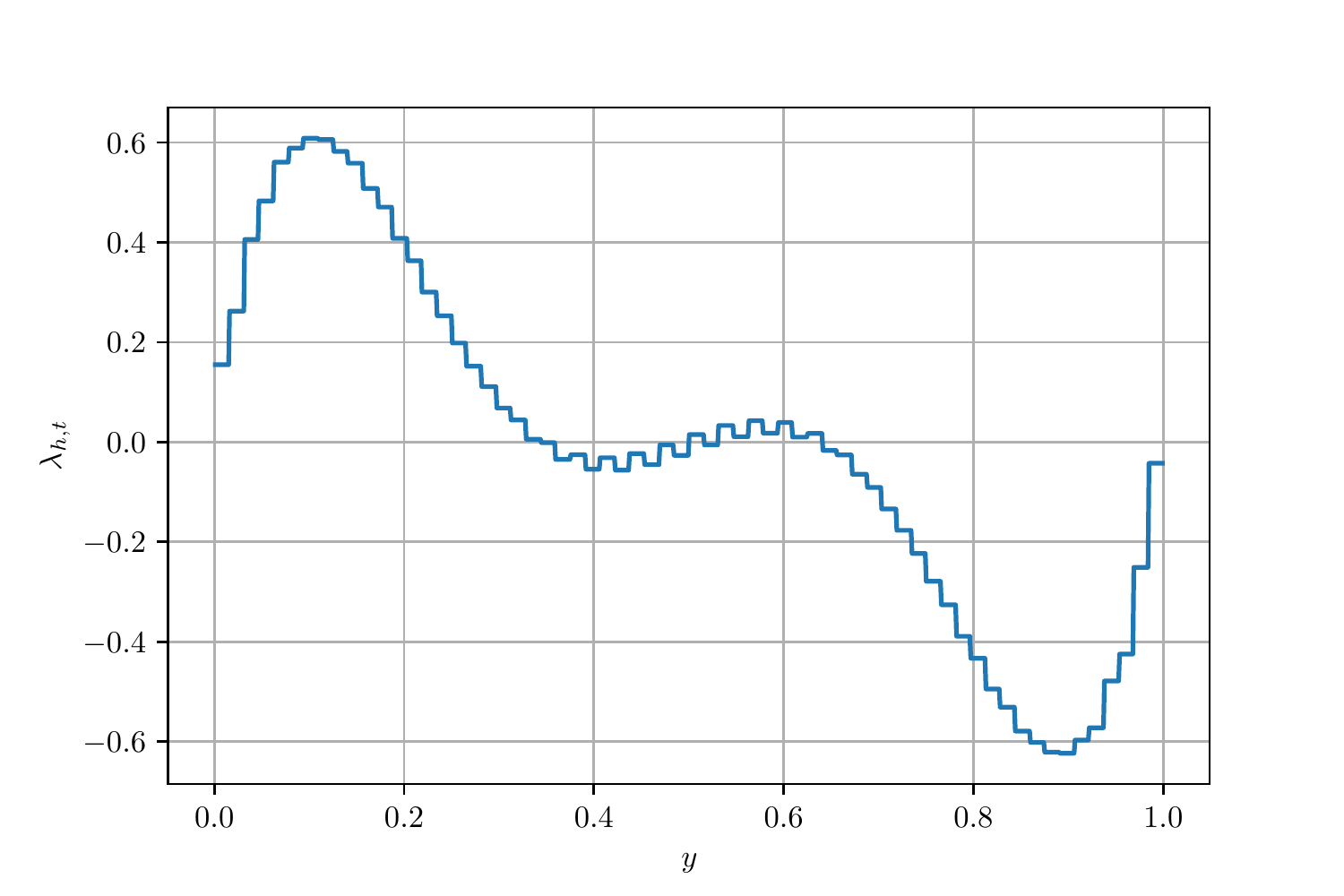}
    \caption{A comparison of the Lagrange multipliers for the unstable $P_1 - P_0$ mixed method (left) and its stabilized counterpart (right).}
    \label{fig:lagmult}
\end{figure}

\begin{figure}
    \centering
    \includegraphics[width=0.51\textwidth]{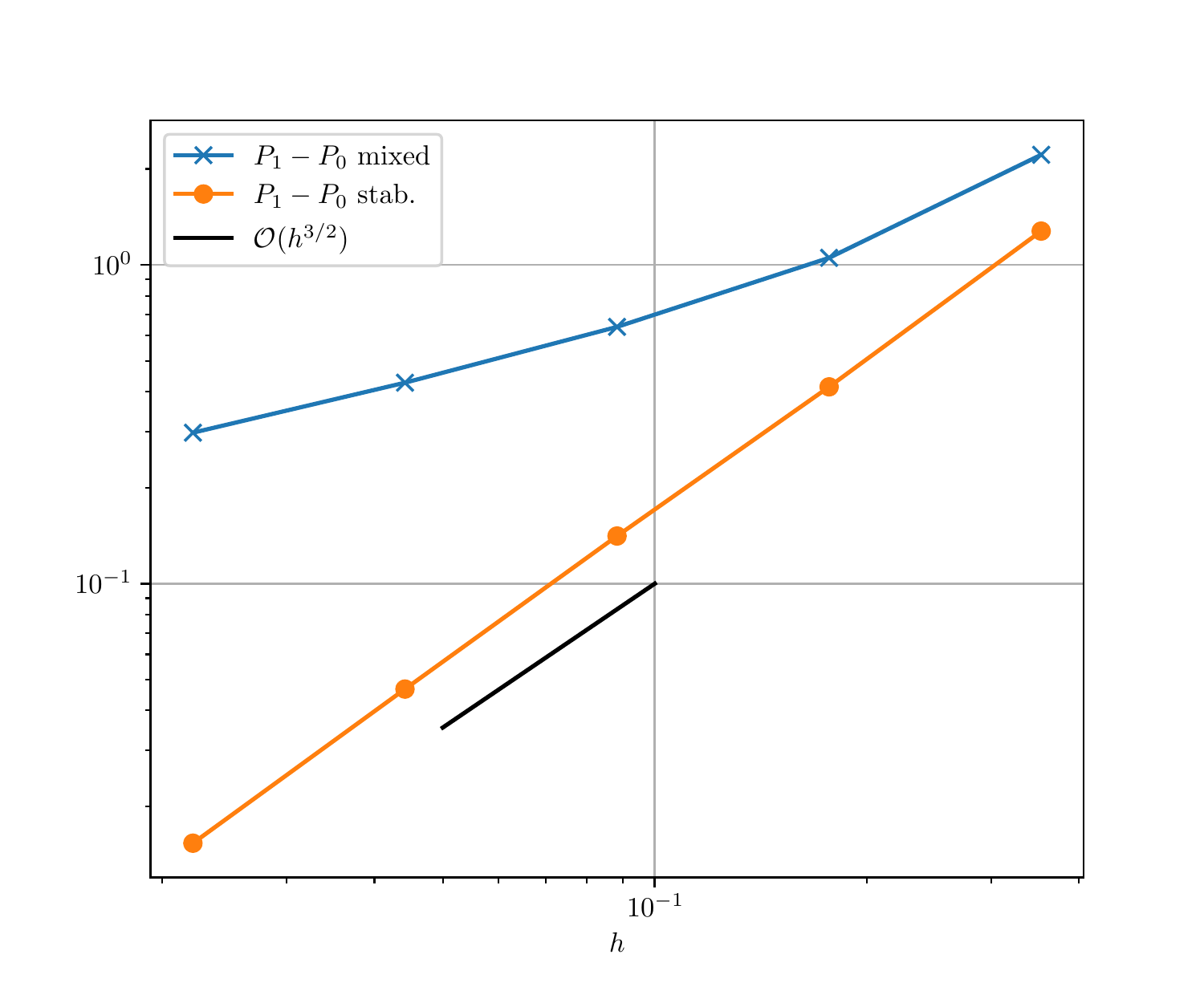}
    \hspace{-0.7cm}
    \includegraphics[width=0.51\textwidth]{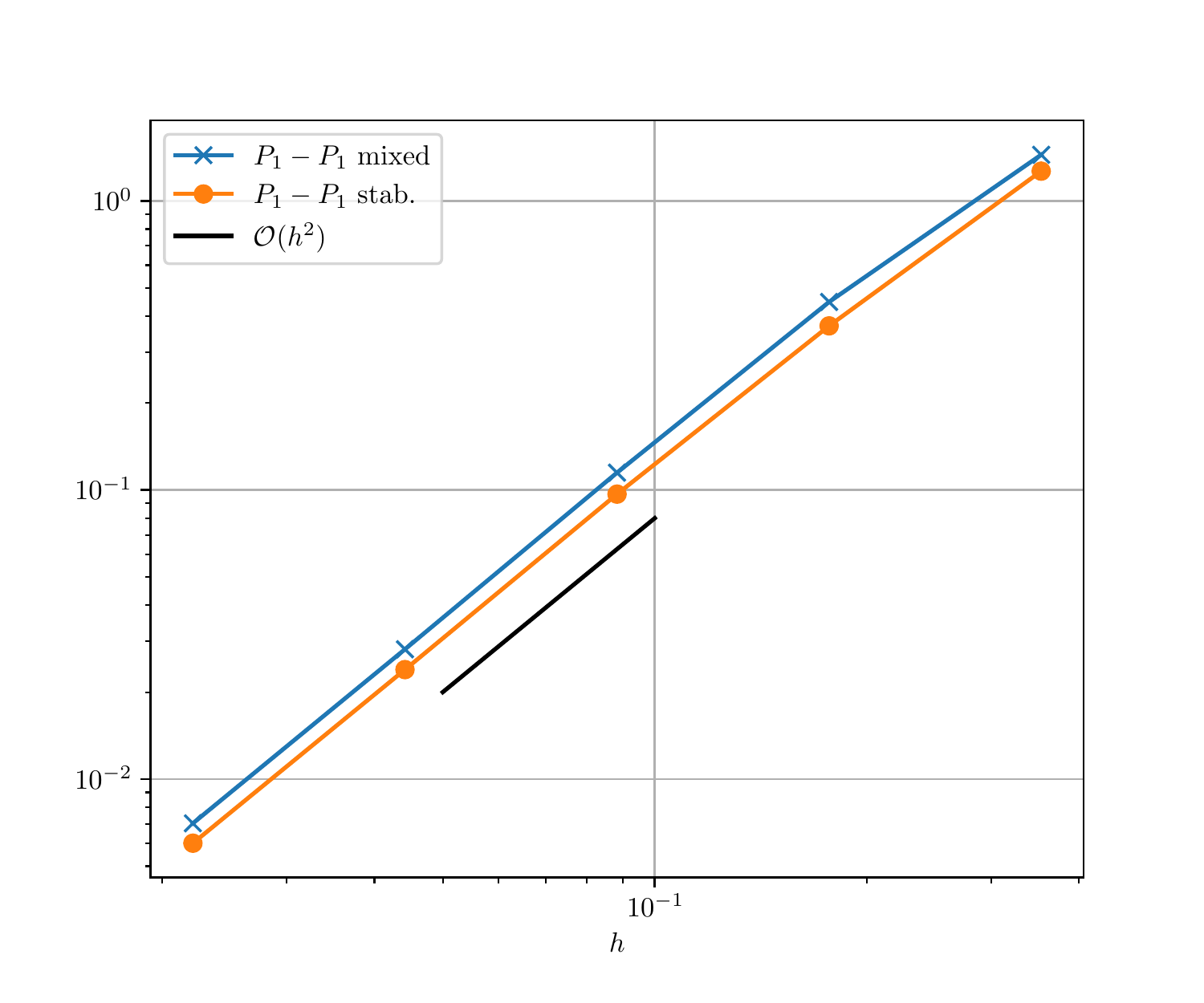}
    \caption{The error of the Lagrange multiplier for $P_1 - P_0$ methods (left) and $P_1 - P_1$ methods (right) in the discrete $H^{-1/2}$-norm.}
    \label{fig:p1}
\end{figure}

\begin{figure}
    \centering
    \includegraphics[width=0.4\textwidth]{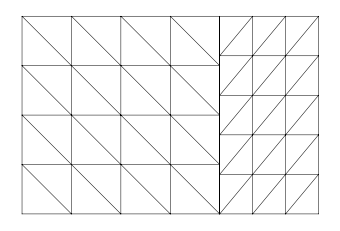}
    \includegraphics[width=0.37\textwidth]{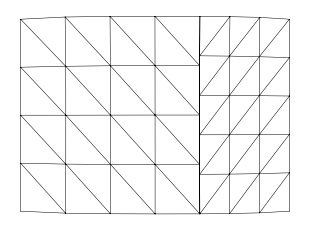}
    \caption{The initial meshes (left) and the deformed meshes (right)
    with nonmatching nodes for the second numerical experiment.}
    \label{fig:mesh2}
\end{figure}

\begin{figure}
    \centering
    \includegraphics[width=0.49\textwidth]{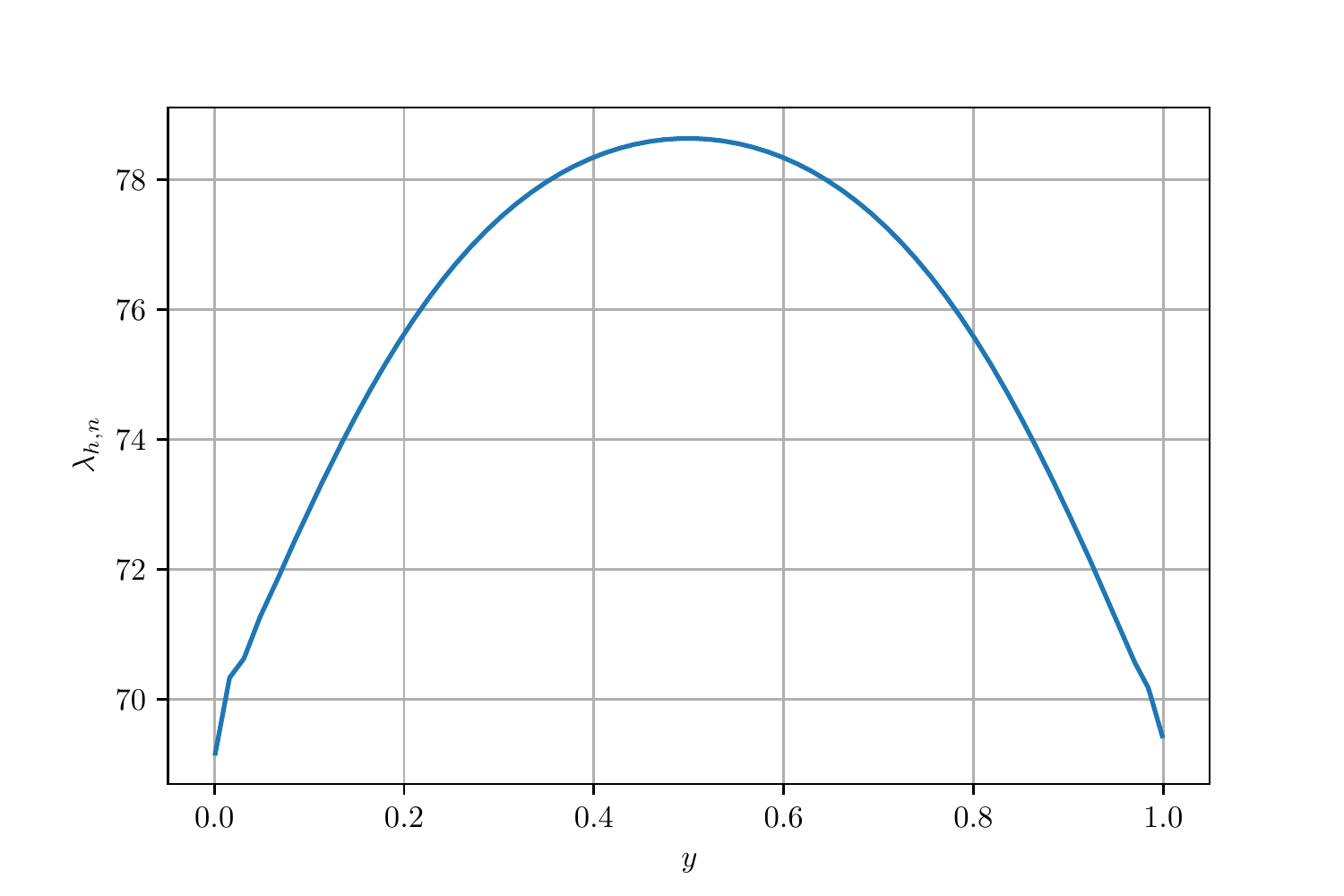}
    \includegraphics[width=0.49\textwidth]{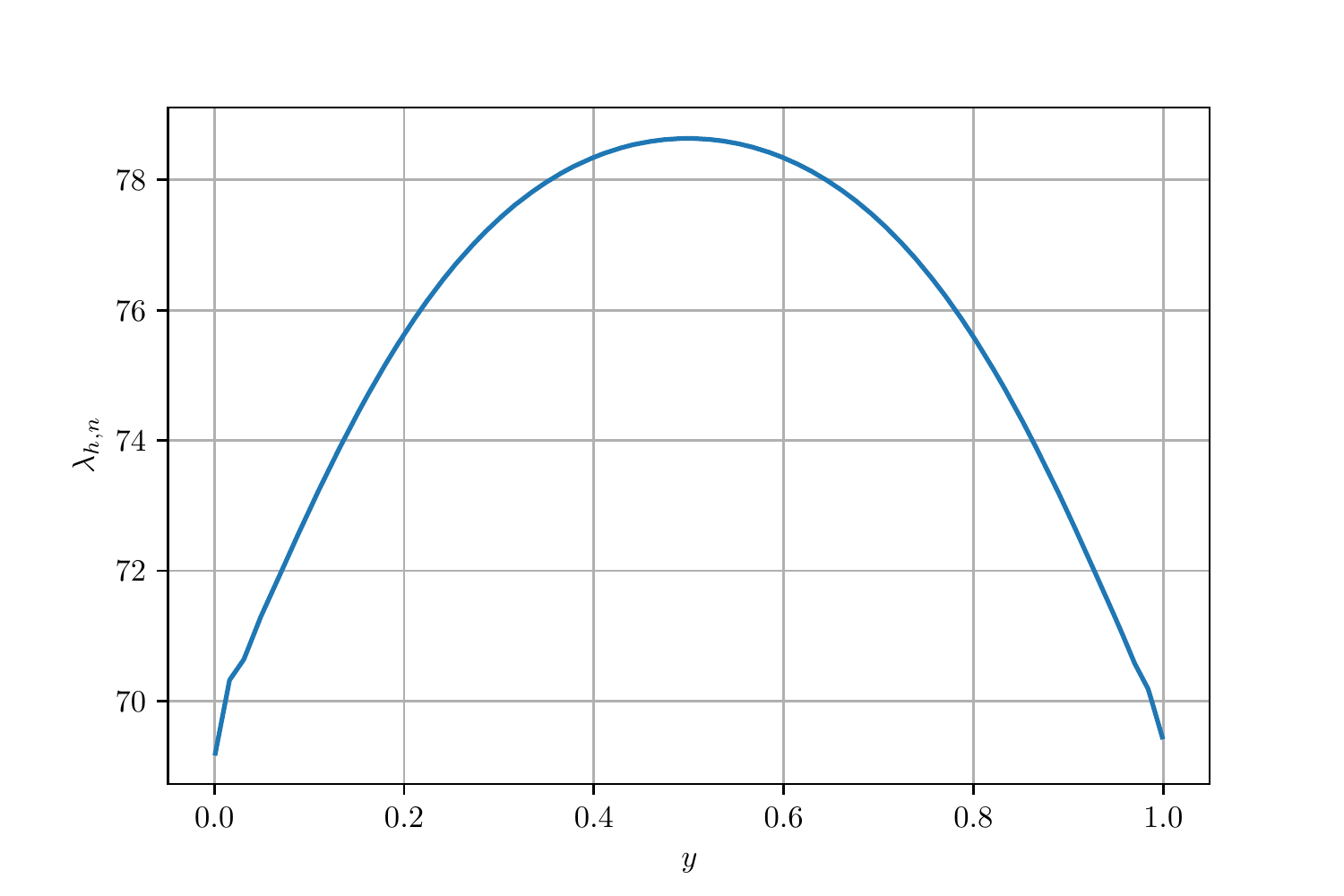}\\
    \includegraphics[width=0.49\textwidth]{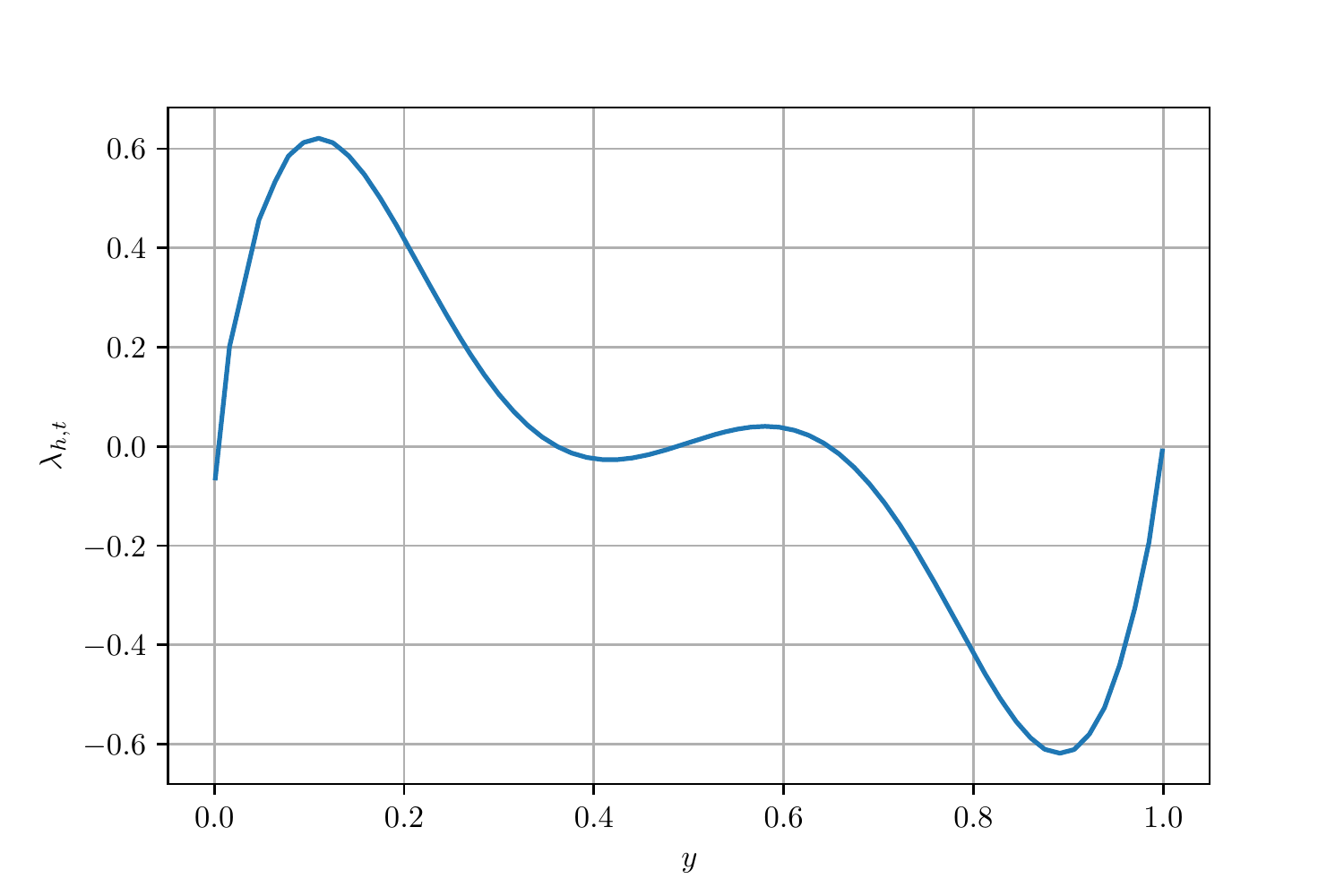}
    \includegraphics[width=0.49\textwidth]{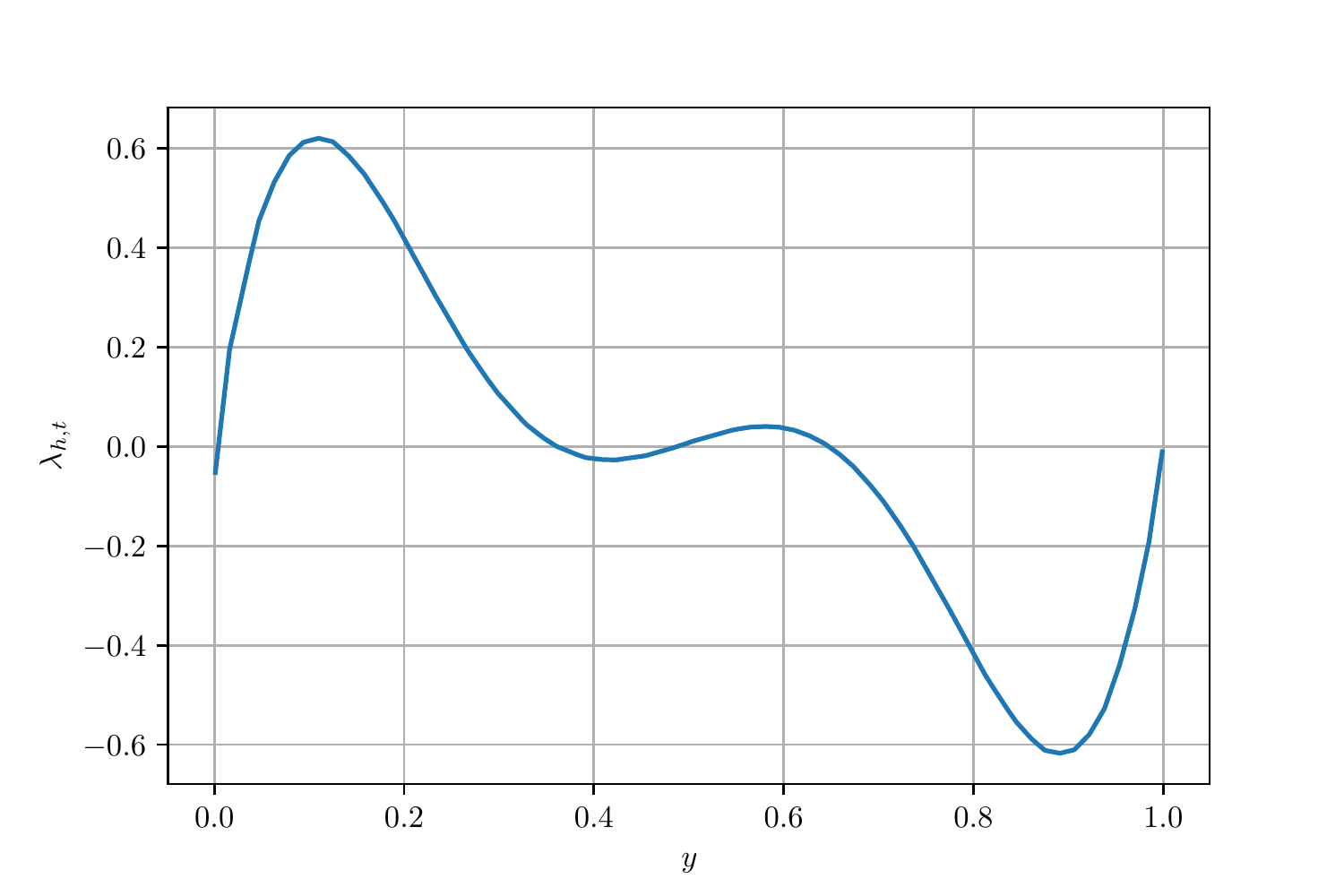}
    \caption{A comparison of the Lagrange multipliers for the mixed (left) and stabilized (right) $P_1 - P_1$ methods.}
    \label{fig:clagmult}
\end{figure}

\FloatBarrier

\begin{figure}
    \centering
    \includegraphics[width=0.25\textwidth]{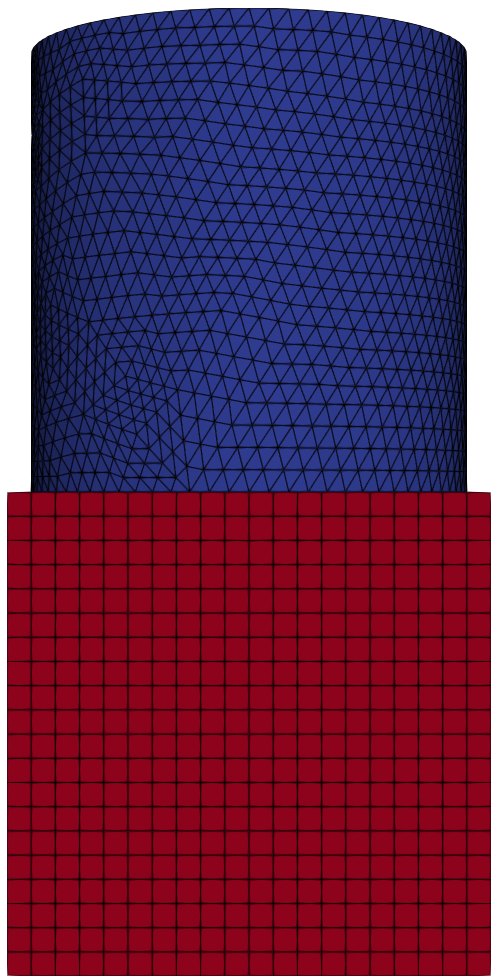}
    \includegraphics[width=0.30\textwidth]{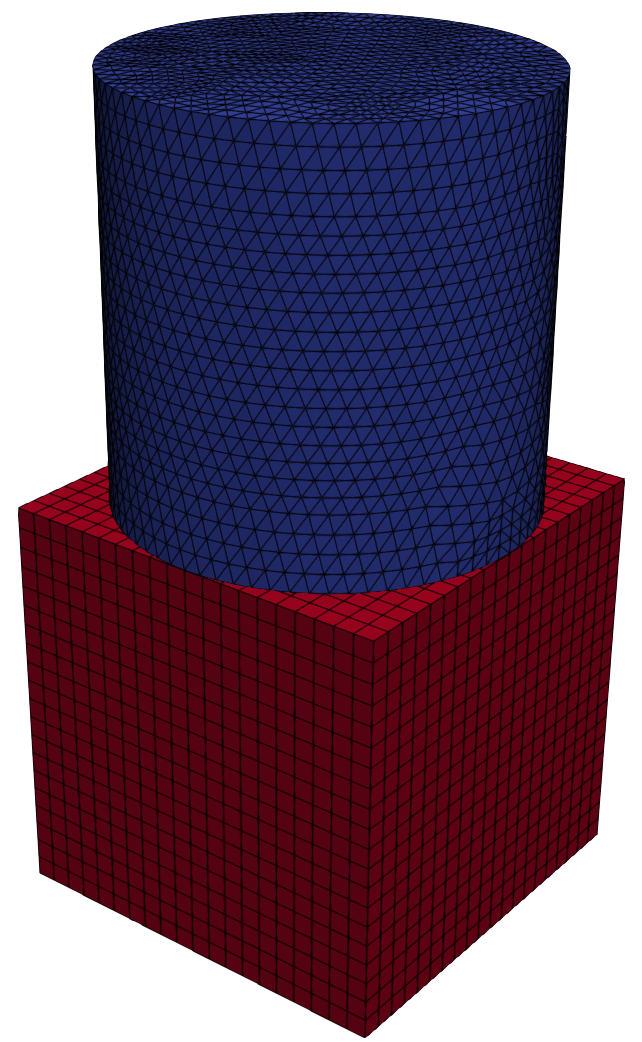}
    \caption{Cylinder and block geometry and computational mesh.}
    \label{fig:meshes1}
\end{figure}

\begin{figure}
    \centering
    \includegraphics[width=\textwidth]{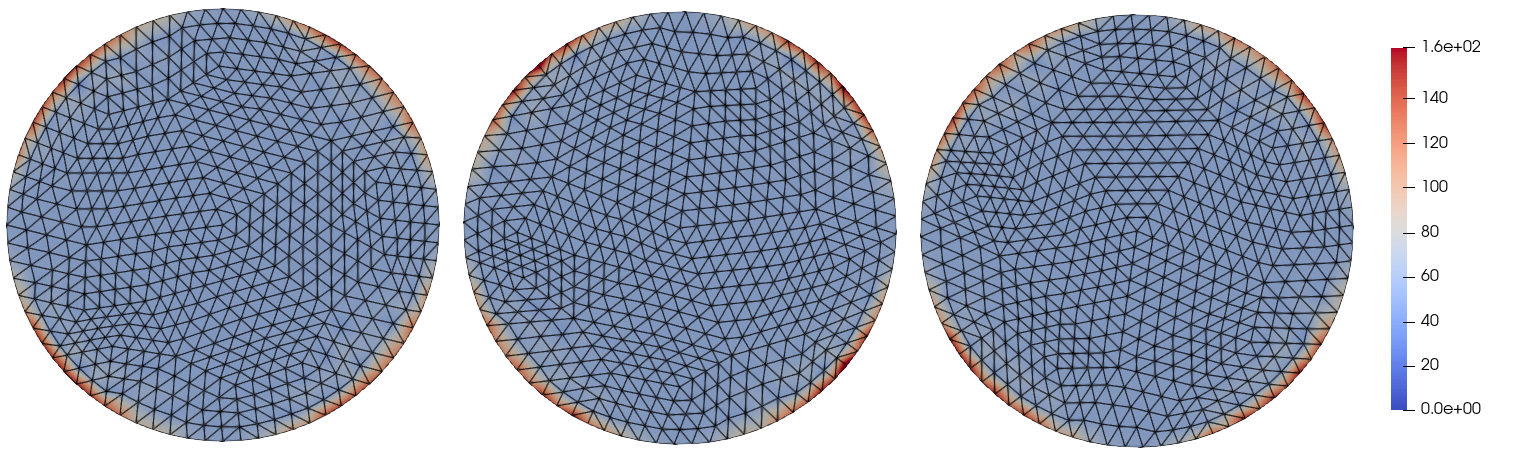}
    \caption{The normal contact loads for three different angles.}
    \label{fig:normals1}
\end{figure}

\begin{figure}
    \centering
    \includegraphics[width=\textwidth]{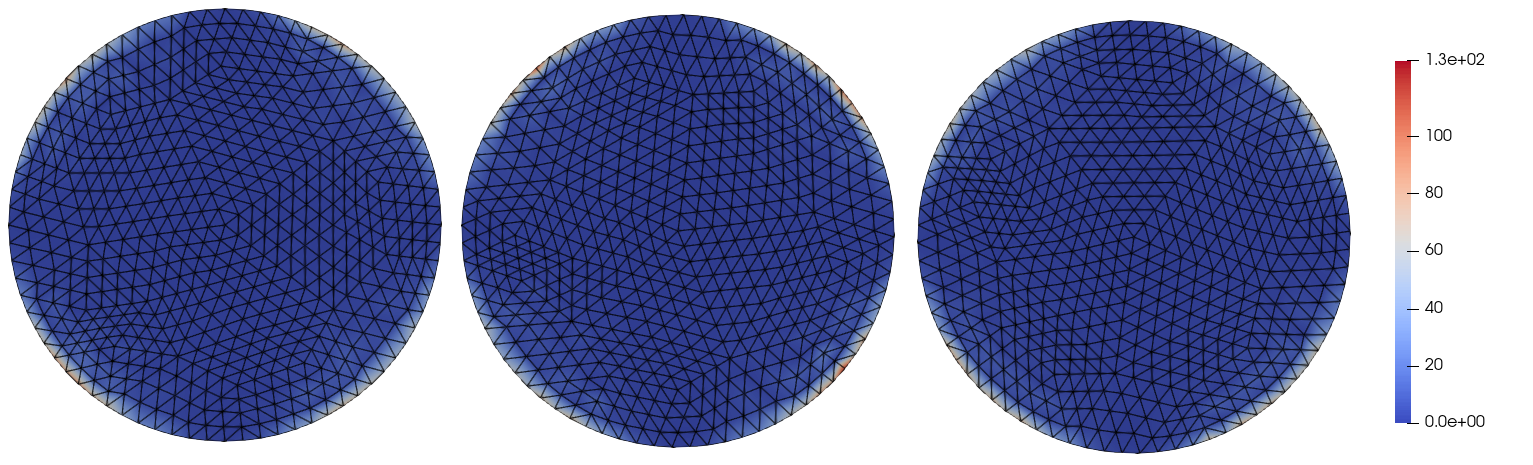}
    \caption{The tangential contact load magnitudes for three different angles.}
    \label{fig:tangents1}
\end{figure}

%\begin{figure}
%\centering
%\includegraphics[width=0.48\textwidth]{figures/mesh2.png}
%\includegraphics[width=0.42\textwidth]{figures/mesh1.png}
%\caption{Block against block.}
%\label{fig:meshes2}
%\end{figure}

%\begin{figure}
%    \centering
%    \includegraphics[width=\textwidth]{figures/norm.png}
%    \caption{The normal contact loads for three different angles.}
%    \label{fig:normals2}
%\end{figure}

%\begin{figure}
%    \centering
%    \includegraphics[width=\textwidth]{figures/tang.png}
%    \caption{The tangential contact loads for three different angles.}
%    \label{fig:tangents2}
%\end{figure}

\begin{figure}
    \centering
    \includegraphics[width=0.4\textwidth]{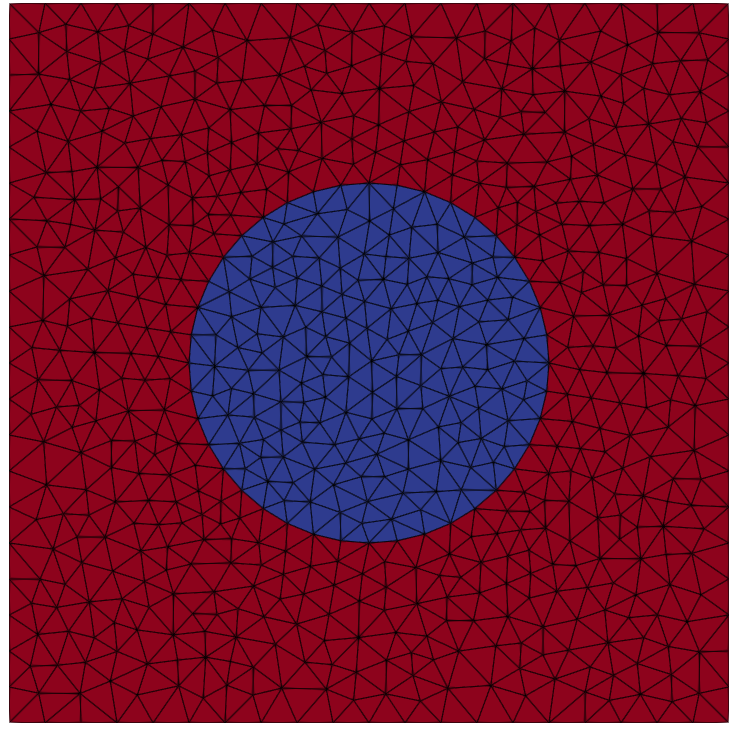}
    \includegraphics[width=0.4\textwidth]{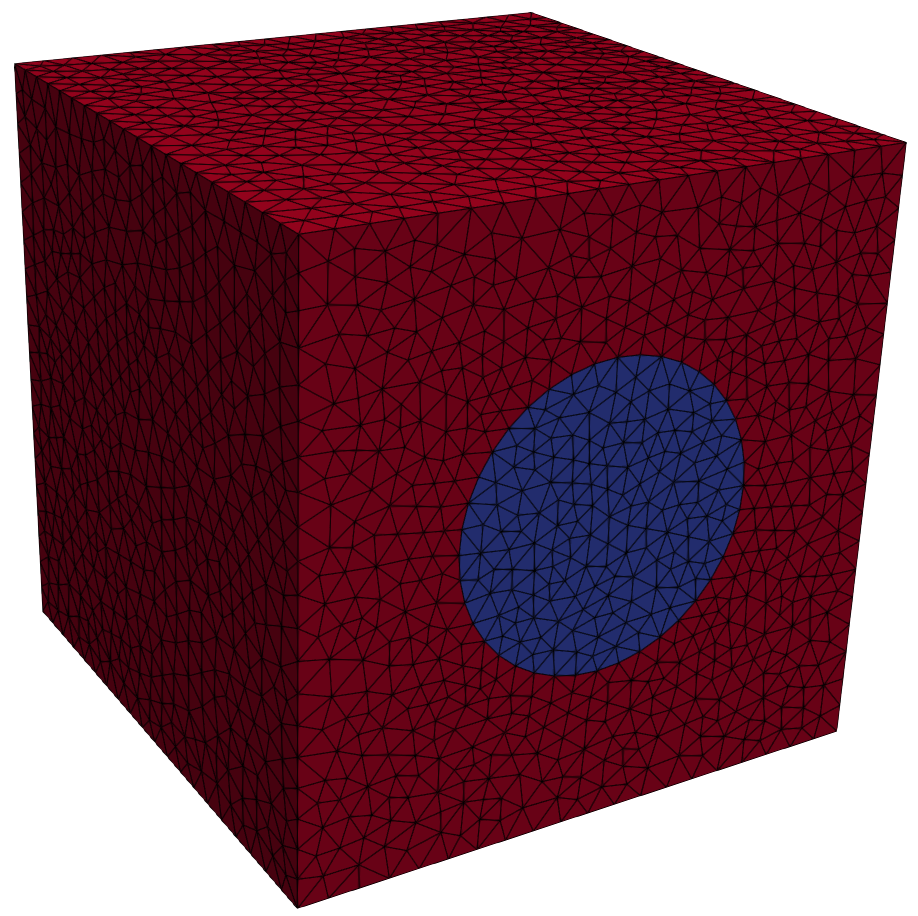}
    \caption{Cylinder cutting the cube.}
    \label{fig:meshes3}
\end{figure}

\begin{figure}
    \centering
    \includegraphics[width=\textwidth]{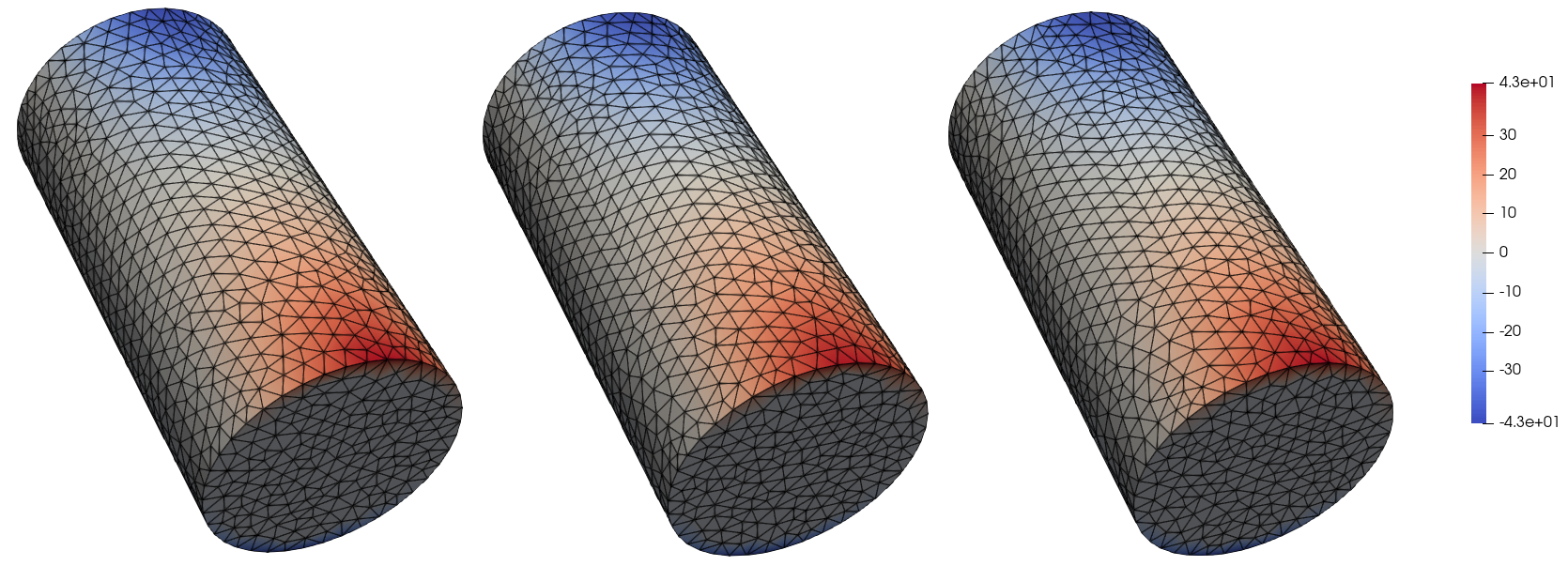}
    \caption{The normal contact loads for three different angles.}
    \label{fig:normals3}
\end{figure}

\begin{figure}
    \centering
    \includegraphics[width=\textwidth]{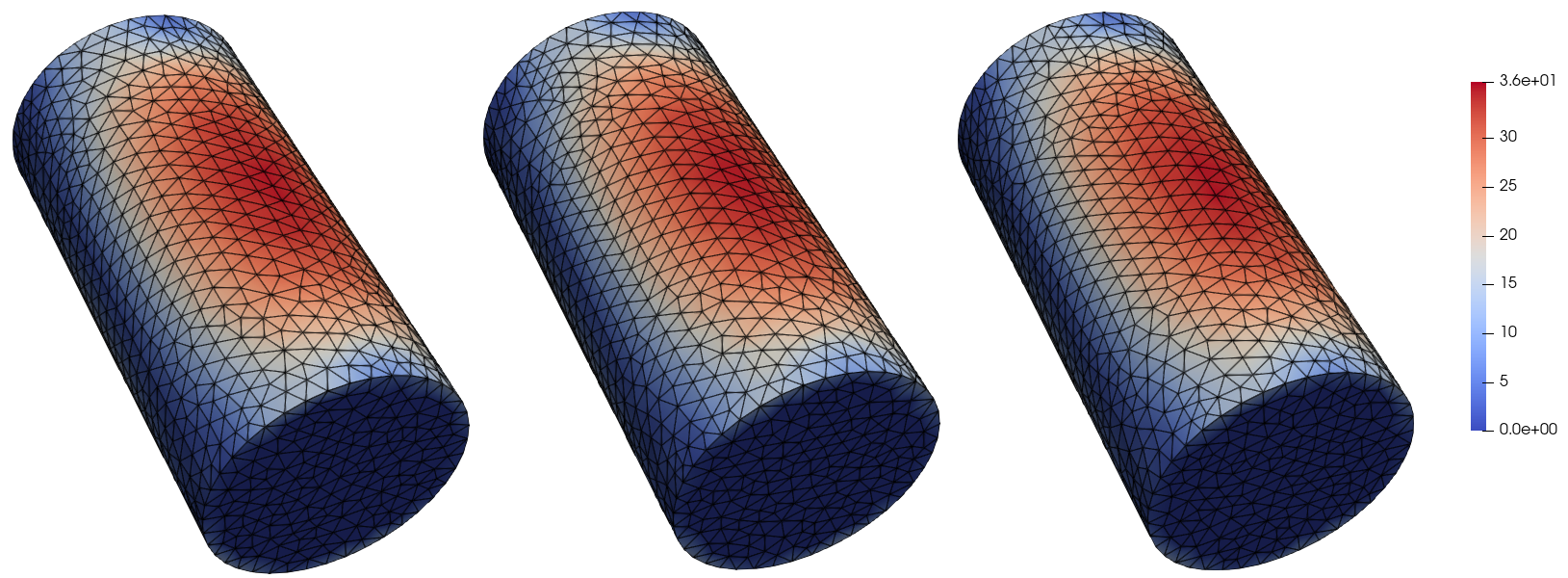}
    \caption{The tangential contact load magnitudes for three different angles.}
    \label{fig:tangents3}
\end{figure}

\begin{figure}
    \centering
    \includegraphics[width=\textwidth]{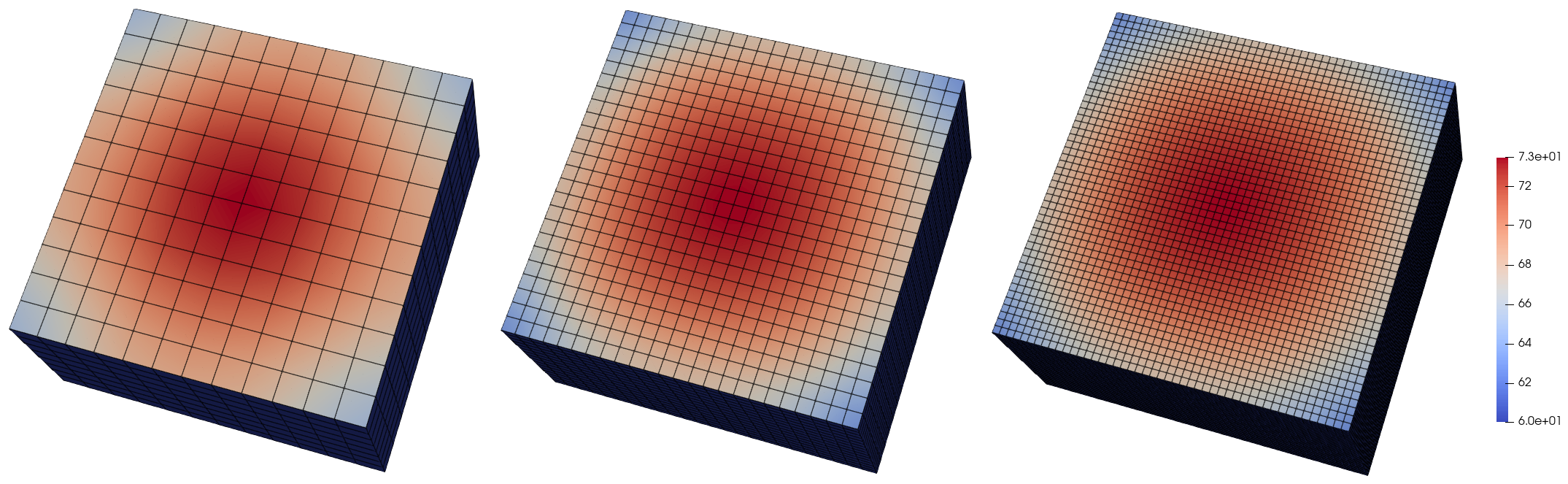}
    \caption{The normal contact loads for three different refinement levels.}
    \label{fig:normloads}
\end{figure}

\begin{figure}
    \centering
    \includegraphics[width=\textwidth]{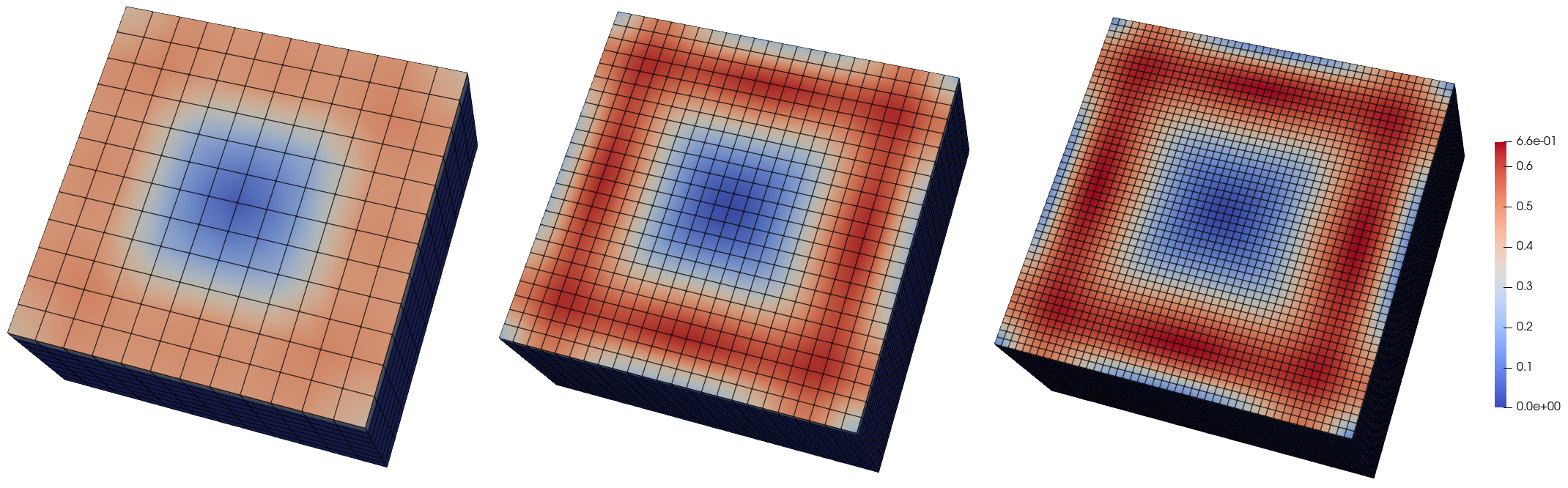}
    \caption{The tangential contact load magnitudes for three different refinement levels.}
    \label{fig:sliploads}
\end{figure}

\begin{figure}
    \centering
    \includegraphics[width=0.49\textwidth]{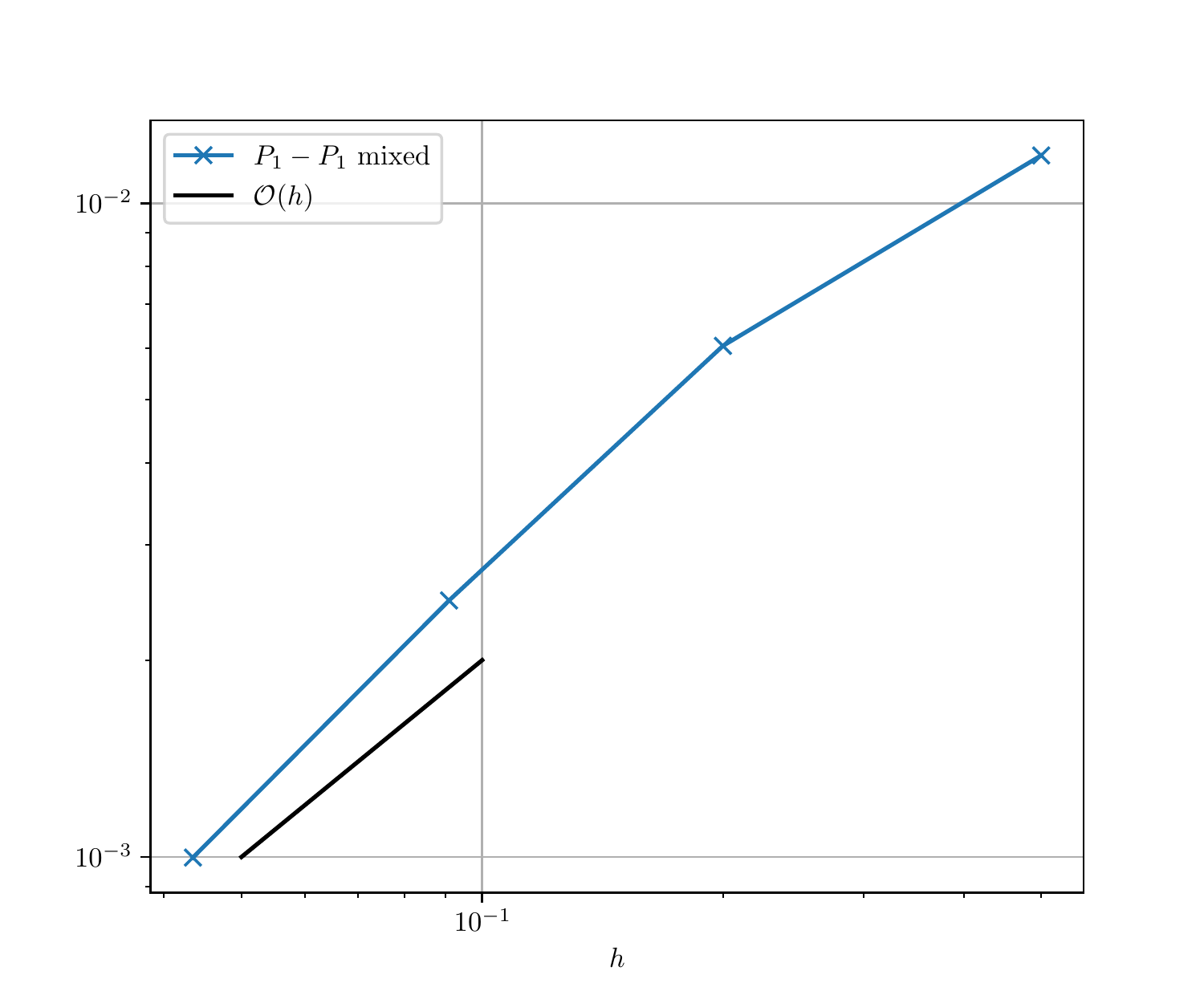}
    \caption{The absolute change in the strain energy between two subsequent refinement levels as a function of the mesh parameter $h$.}
    \label{fig:strainconv}
\end{figure}

\bibliography{references}
\bibliographystyle{elsarticle-num}

\end{document}